\date{}
\renewcommand{\uppercasenonmath}[1]{}
\theoremstyle{plain}
\newtheorem{theorem}{Theorem}[section]
\newtheorem{proposition}[theorem]{Proposition}
\newtheorem{lemma}[theorem]{Lemma}
\newtheorem{corollary}[theorem]{Corollary}
\theoremstyle{definition}
\newtheorem{example}[theorem]{Example}
\newtheorem{definition}[theorem]{Definition}
\theoremstyle{definition}
\newtheorem*{acknowledgement}{Acknowledgement}
\theoremstyle{remark}
\newtheorem{remark}[theorem]{Remark}
\newcommand{\pf}{\noindent\begin {proof}}
\newcommand{\epf}{\end{proof}}
\newcommand{\Ext}{\mbox{\rm Ext}}
\newcommand{\Hom}{\mbox{\rm Hom}}
\newcommand{\Tor}{\mbox{\rm Tor}}
\newcommand{\Prufer}{Pr\"{u}fer}
\newcommand{\Id}{\mathrm{Id}}
\def\ra{\rightarrow}
\def\Hom{{\rm Hom}}
\def\Ext{{\rm Ext}}
\def\Tor{{\rm Tor}}
\def\m{{\frak m}}
\def\p{{\frak p}}
\def\Nil{{\rm Nil}}
\def\NP{{\rm NP}}
\def\Z{{\rm Z}}
\def\T{{\rm T}}
\def\Spec{{\rm Spec}}
\def\Max{{\rm Max}}
\def\ZN{{\rm ZN}}
\def\Ann{{\rm Ann}}
\def\Krull{{\rm Krull}}
\begin{document}
\begin{center}
{\large  \bf Some remarks on nonnil-coherent rings and $\phi$-IF rings}

\vspace{0.5cm}  \ Wei Qi$^{a}$,\  Xiaolei Zhang$^{b}$,
%\bigskip

{\footnotesize a.\ \ School of Mathematical Sciences, Sichuan Normal University, Chengdu 610068, China\\
b.\ \ Department of Basic Courses, Chengdu Aeronautic Polytechnic, Chengdu 610100, China\\

E-mail: zxlrghj@163.com\\}
\end{center}
%\begin{figure}[b]
%\rule[-2.5truemm]{5cm}{0.1truemm}\\[2mm]
%{\small }
%\end{figure}

%\begin{figure}[b]
%\rule[-2.5truemm]{5cm}{0.1truemm}\\[2mm]
%{\small }
%\end{figure}
\bigskip
\centerline { \bf  Abstract}
\bigskip
\leftskip10truemm \rightskip10truemm \noindent

Let $R$ be a commutative ring. If the nilpotent radical $\Nil(R)$ of $R$ is a divided prime ideal, then $R$ is called a  $\phi$-ring. In this paper,  we first distinguish the classes of  nonnil-coherent rings and $\phi$-coherent rings introduced by Bacem and Ali \cite{BA16}, and then characterize nonnil-coherent  rings in terms of $\phi$-flat modules, nonnil-injective modules and  nonnil-FP-injective modules. A $\phi$-ring $R$ is called a $\phi$-IF ring if any nonnil-injective module is $\phi$-flat. We obtain some module-theoretic characterizations of $\phi$-IF rings. Two examples are given to distinguish $\phi$-IF rings and IF $\phi$-rings.
\vbox to 0.3cm{}\\
{\it Key Words:} nonnil-coherent rings; $\phi$-IF rings; $\phi$-flat modules; nonnil-injective modules; nonnil-FP-injective modules.\\
{\it 2020 Mathematics Subject Classification:} Primary: 13A15; Secondary: 13F05.

\leftskip0truemm \rightskip0truemm
\bigskip
%\section { \bf Introduction    }
%\bigskip

Throughout this paper, all rings are commutative with identity and all modules are unital.  Recall from \cite{A97} that a  ring $R$ is called an \emph{$\NP$-ring} provided that  $\Nil(R)$ is a prime ideal, and a \emph{$\ZN$-ring} provided that $\Z(R)=\Nil(R)$ where $\Z(R)$ is the set of all zero-divisors of $R$. A prime ideal $P$ of $R$ is called \emph{divided prime} if $P\subsetneq (x)$, for every $x\in R-P$.  A ring $R$ is called a \emph{$\phi$-ring}, denoted by $R\in \mathcal{H}$, if  $\Nil(R)$ is a divided prime ideal of $R$. A $\phi$-ring is called a \emph{ strongly $\phi$-ring} if it is also a $\ZN$-ring.  Recall from \cite{FA04} that for a $\phi$-ring $R$ with total quotient ring $\T(R)$, the map $\phi: T(R)\rightarrow R_{\Nil(R)}$ such that $\phi(\frac{b}{a})=\frac{b}{a}$ is a ring homomorphism, and the image of $R$, denoted by $\phi(R)$, is a strongly $\phi$-ring. The classes of $\phi$-rings and strongly $\phi$-rings are good extensions of  integral domains to commutative rings with zero-divisors. In 2002, Badawi \cite{A03}  generalized the concept of Noetherian rings to that of nonnil-Noetherian rings in which all nonnil ideals are finitely generated. They showed that a $\phi$-ring $R$ is nonnil-Noetherian if and only if $\phi(R)$ is nonnil-Noetherian, if and only if $R/\Nil(R)$ is a Noetherian domain. Generalizations of Dedekind domains, Pr\"{u}fer domains, Bezout domains, Pseudo-valuation domains and Krull domains to the context of rings that are in the class $\mathcal{H}$ are also introduced and studied (see \cite{FA04,FA05, A01,ALT06}).

In 2016, Bacem and Ali \cite{BA16} introduced two versions of coherent rings that are in the class $\mathcal{H}$.  A $\phi$-ring $R$ is called  \emph{nonnil-coherent} provided that each finitely generated nonnil ideal of $R$ is finitely presented, and $R$ is called  \emph{$\phi$-coherent} provided that  $\phi(R)$ is a nonnil-coherent ring. However, they neither showed these two versions of coherent rings coincide for all $\phi$-rings  nor presented any example to distinguish them.  In Section 1 of this article, we show that any nonnil-coherent ring is $\phi$-coherent (see Proposition \ref{non-c is phi-c}), and give an example to show the converse does not hold (see Example \ref{not non-coh}). Bacem and Ali \cite{BA16} obtained the Chase Theorem for nonnil-coherent rings using $\phi$-flat modules.  We continue to characterize nonnil-coherent rings  in terms of nonnil-injective modules and nonnil-FP-injective modules. Actually, we generalized Stenstr\"{o}m's, Dai-Ding's and Chen's results on coherent rings to these of  nonnil-coherent rings (see Theorem \ref{s-d-d-non} and Theorem \ref{phi-coh-fp}).

Recall from \cite{J73} that a ring $R$ is called an \emph{IF ring} (also is called  \emph{semi-regular} in \cite{AK17, M85}) if any injective $R$-module is flat. The class of IF rings can be seen as a natural extension of that of QF rings to coherent rings and has many module-theoretic characterizations (see \cite{FD10, MD07, M85}).  Recently, Wang and Kim \cite{WK21} introduced and studied a new version of IF rings under $w$-operations. In Section 2 of this article, we generalize the class of IF rings to that of rings in $\mathcal{H}$. A $\phi$-ring $R$ is called a \emph{$\phi$-IF ring} if any nonnil-injective $R$-module is $\phi$-flat.  We obtain that a $\phi$-ring $R$ is a $\phi$-IF ring if and only  if $R$ is  nonnil-coherent and $R_{\m}$ is a $\phi$-IF ring for any $\m\in \Max(R)$, if and only if the class of $\phi$-flat modules is equal to that of nonnil-FP-injective modules, if and only if  any $\phi$-torsion $R$-module is $\phi$-copure flat, if and only if $R/\Nil(R)$ is a field (see Theorem \ref{phi-IF-ext}). We show that IF rings are $\phi$-IF rings for all strongly $\phi$-rings (see Proposition \ref{IF-phi-if}) and give two examples to distinguish  IF $\phi$-rings and  $\phi$-IF rings (see Example \ref{not phi-IF} and Example \ref{not IF} ). Finally, we give an analogue of  Matlis' result to characterize  $\phi$-Pr\"{u}fer rings (see Corollary \ref{phi-pru-if}).

\section{on nonnil-coherent rings}

Recall that a ring $R$ is \emph{coherent} if any finitely generated ideal is finitely presented.  Bacem and Ali \cite{BA16} generalized the notion of coherent rings  to  two classes of rings in $\mathcal{H}$: nonnil-coherent rings and $\phi$-coherent rings. Let $R$ be a $\phi$-ring. Then
 \begin{enumerate}
    \item $R$ is called \emph{nonnil-coherent}, provided that any finitely generated nonnil ideal of $R$ is finitely presented.
        \item $R$ is called \emph{ $\phi$-coherent}, provided that $\phi(R)$ is nonnil-coherent.
 \end{enumerate}

Obviously, if a $\phi$-ring $R$ is coherent then $R$ is nonnil-coherent, and a strongly $\phi$-ring is nonnil-coherent if and only if it is $\phi$-coherent.

\begin{proposition}\label{nonn-coh}
Let $R$ be a $\phi$-ring, then   the following assertions are equivalent:
\begin{enumerate}
    \item  $R$ is nonnil-coherent;
      \item $(0:_Rr)$ is a finitely generated ideal for any  non-nilpotent element $r\in R$, and the intersection of two finitely generated nonnil ideals of $R$ is a finitely generated nonnil ideal of $R$;
      \item $(I:_Rb)$ is a finitely generated ideal for any non-nilpotent element $b\in R$ and any   finitely generated  ideal $I$ of $R$.
 \end{enumerate}
\end{proposition}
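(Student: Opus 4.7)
The statement is the nonnil analogue of Chase's classical characterization of coherent rings, and the proof follows the same three exact-sequence arguments, with the caveat that every ideal we feed into the hypothesis of (2) must be nonnil and every element fed into (3) must be non-nilpotent; the divided-prime property of $\Nil(R)$ is what lets these caveats be managed.

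For (1) $\Rightarrow$ (2), I would apply nonnil-coherence to the finitely generated nonnil ideals $(r)$ and $I+J$, and read off $(0:_R r)$ and $I \cap J$ as the kernels in
\[
0 \to (0:_R r) \to R \to (r) \to 0 \qquad\text{and}\qquad 0 \to I \cap J \to I \oplus J \to I+J \to 0.
\]
The only extra point is that $I \cap J$ should be nonnil: picking non-nilpotent $a\in I$ and $b\in J$, the product $ab$ lies in $I\cap J$ and is non-nilpotent because $\Nil(R)$ is prime.

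For (2) $\Rightarrow$ (3), I would multiply by $b$ to obtain
\[
0 \to (0:_R b) \to (I:_R b) \xrightarrow{\;\cdot b\;} I \cap (b) \to 0,
\]
which reduces the problem to finite generation of the two outer terms. The left term is finitely generated by the first half of (2). For the right term, I split on whether $I$ is nonnil (apply the second half of (2) to $I$ and $(b)$) or $I\subseteq\Nil(R)$; in the latter case the divided-prime property forces $\Nil(R)\subseteq (b)$, so $I\cap(b)=I$ is already finitely generated.

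For (3) $\Rightarrow$ (1), given a finitely generated nonnil ideal $I=(a_1,\dots,a_n)$, I would first modify the generating set so that every $a_i$ is non-nilpotent: fix some $a_i\notin\Nil(R)$ and replace each nilpotent $a_j$ by $a_j+a_i$, which leaves $I$ unchanged and keeps the new $a_j$ outside $\Nil(R)$. I then induct on $n$. The base case $n=1$ gives syzygy $(0:_R a_1)$, finitely generated by (3) with the zero ideal. For the inductive step, projection of the syzygy module onto the last coordinate yields
\[
0 \to K_{n-1} \to K_n \to ((a_1,\dots,a_{n-1}):_R a_n) \to 0,
\]
where the quotient is finitely generated by (3) applied to the non-nilpotent $a_n$, and $K_{n-1}$ is finitely generated by the inductive hypothesis since $(a_1,\dots,a_{n-1})$ contains the non-nilpotent $a_1$ and is therefore nonnil. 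Hence $K_n$ is finitely generated and $I$ is finitely presented. The one genuine obstacle is the generator-modification step in (3) $\Rightarrow$ (1): without it, the induction could be forced to invoke (3) with a nilpotent divisor or to apply the inductive hypothesis to a nil subideal, and the argument would stall.
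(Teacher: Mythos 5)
Your proof is correct, and its core engine is the same Chase-style exact-sequence bookkeeping the paper uses, but the logical organization is genuinely different and you fill in two points the paper leaves implicit. The paper handles $(1)\Leftrightarrow(2)$ by citing \cite[Theorem 2.1]{BA16}, proves $(1)\Rightarrow(3)$ directly via a pullback comparing $0\to(I:_Rb)\to R\to(Rb+I)/I\to 0$ with $0\to I\to Rb+I\to(Rb+I)/I\to 0$, and then does $(3)\Rightarrow(1)$ by essentially your induction (your syzygy sequence $0\to K_{n-1}\to K_n\to(L:_Ra_n)\to 0$ is the kernel-level repackaging of the paper's two sequences $0\to(L:_Ra_n)\to R\to I/L\to 0$ and $0\to L\to I\to I/L\to 0$). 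You instead close the cycle $1\Rightarrow2\Rightarrow3\Rightarrow1$: you give a self-contained proof of $(1)\Rightarrow(2)$ using $0\to I\cap J\to I\oplus J\to I+J\to 0$ (with the correct observation that $I\cap J$ is nonnil because $ab\notin\Nil(R)$ for $\Nil(R)$ prime), and you derive $(3)$ from $(2)$ via $0\to(0:_Rb)\to(I:_Rb)\xrightarrow{\cdot b}I\cap(b)\to 0$, where the case split on whether $I\subseteq\Nil(R)$, resolved by the divided-prime property forcing $\Nil(R)\subseteq(b)$, is exactly what makes the reduction work when $I$ is allowed to be nil. Your route buys independence from the citation at the cost of that case analysis; the paper's pullback avoids it. Finally, your generator-modification step (replacing a nilpotent $a_j$ by $a_j+a_i$ with $a_i$ non-nilpotent, which changes neither the ideal nor, by primeness of $\Nil(R)$, the non-nilpotence) justifies something the paper simply asserts when it writes ``generated by $\{a_1,\dots,a_n\}$ where each $a_i$ is non-nilpotent''; that is a worthwhile addition.
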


\begin{proof}  $(1)\Leftrightarrow (2)$:  See \cite[Theorem 2.1]{BA16}.

$(1)\Rightarrow (3)$: Let $I$ be a finitely generated  ideal of $R$ and $b$ a non-nilpotent  element in $R$. Consider the following pull-back diagram: $$\xymatrix@R=20pt@C=15pt{
 0 \ar[r]^{}  & (I:_Rb)\ar[d]_{}\ar[r]^{} &R\ar[r]^{}\ar[d]_{} & (Rb+I)/I\ar@{=}[d]^{}\ar[r]^{} &  0\\
   0 \ar[r]^{} &I \ar[r]^{} &Rb+I \ar[r]^{} & (Rb+I)/I\ar[r]^{} &  0.}$$
Since $R$ is  nonnil-coherent,  $Rb+I$ is finitely presented. Since $I$ is finitely generated, $(Rb+I)/I$ is finitely presented by \cite[Theorem 2.1.2(2)]{g}. Thus $(I:_R b)$ is finitely generated by \cite[Theorem 2.1.2(3)]{g}.

$(3)\Rightarrow (1)$: Let  $I$ be a finitely generated nonnil ideal of $R$ generated by $\{a_1,...,a_{n}\}$ where each $a_i$ is non-nilpotent. We will show $I$ is finitely presented by induction on $n$. The case $n=1$ follows from the exact sequence $0\rightarrow (0:_Ra_1)\rightarrow R\rightarrow Ra_1\rightarrow 0$. For $n\geq 2$, let $L=\langle a_1,...,a_{n-1}\rangle$. Consider the exact sequence $0 \ra (L:_R a_n)\ra R\ra (Ra_n+L)/L\ra 0$. Then $(Ra_n+L)/L=I/L$ is finitely presented by (3) and \cite[Theorem 2.1.2(2)]{g}. Consider the exact sequence $0 \rightarrow L \rightarrow I \rightarrow I/L\rightarrow 0$. Since $L$ is finitely  presented by induction and $I/L$ is finitely presented, $I$ is also finitely  presented by \cite[Theorem 2.1.2(1)]{g}.
\end{proof}

\begin{proposition}\cite[Corollary 3.1]{BA16}\label{phi-coh}
Let $R$ be a $\phi$-ring, then $R$ is  $\phi$-coherent if and only if $R/\Nil(R)$ is a coherent domain.
\end{proposition}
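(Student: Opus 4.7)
The plan is to first reduce to the strongly $\phi$-ring case. Since $R$ being $\phi$-coherent means by definition that $\phi(R)$ is nonnil-coherent, since $\phi(R)$ is itself a strongly $\phi$-ring, and since one has the standard identification $\phi(R)/\Nil(\phi(R)) \cong R/\Nil(R)$, it will suffice to prove: for a strongly $\phi$-ring $S$, nonnil-coherence of $S$ is equivalent to coherence of the domain $\bar{S} := S/\Nil(S)$.

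The key structural observation I would use is that in any $\phi$-ring $S$, every nonnil ideal $I$ contains $\Nil(S)$: any non-nilpotent $a \in I$ satisfies $\Nil(S) \subsetneq (a) \subseteq I$ by the divided-prime property. Hence $I \mapsto I/\Nil(S)$ is an inclusion-preserving bijection between nonnil ideals of $S$ and nonzero ideals of $\bar{S}$, satisfying $\overline{I \cap J} = \bar{I} \cap \bar{J}$. Moreover, in a strongly $\phi$-ring every non-nilpotent element is regular, so $(0 :_S r) = 0$ for all non-nilpotent $r$; by Proposition~\ref{nonn-coh} this reduces nonnil-coherence of $S$ to the single condition that $I \cap J$ be finitely generated for every pair of finitely generated nonnil ideals $I, J$ of $S$ (the intersection being automatically nonnil, as it contains the product of non-nilpotent generators of $I$ and $J$). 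Coherence of the domain $\bar{S}$ admits the analogous reduction.

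The forward direction $(\Rightarrow)$ is then immediate: lift finitely generated nonzero ideals of $\bar{S}$ to finitely generated nonnil ideals of $S$, apply nonnil-coherence to conclude that $I \cap J$ is finitely generated, and pass back to the quotient. The converse $(\Leftarrow)$ is where I expect the main technical hurdle: from coherence of $\bar{S}$ one recovers $b_1, \ldots, b_k \in I \cap J$ whose images generate $\overline{I \cap J}$, giving $I \cap J = (b_1, \ldots, b_k) + \Nil(S)$, yet $\Nil(S)$ is generally not finitely generated. The way to clear this obstruction is to observe that $\overline{I \cap J} \neq 0$ forces some $b_i$ to be non-nilpotent; the divided-prime property then yields $\Nil(S) \subsetneq (b_i) \subseteq (b_1, \ldots, b_k)$, which collapses the sum to $I \cap J = (b_1, \ldots, b_k)$ and completes the argument.
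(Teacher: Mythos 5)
The paper offers no proof of this proposition: it is imported verbatim as \cite[Corollary 3.1]{BA16}, so there is no internal argument to compare against. Your proof is correct and self-contained, and it runs on exactly the ingredients the paper itself deploys one proposition later, in the proof of Proposition \ref{non-c is phi-c}: the observation that in a $\phi$-ring every nonnil ideal contains $\Nil(R)$, so that $I\mapsto I/\Nil(R)$ is an intersection-preserving, finite-generation-preserving bijection between nonnil ideals and nonzero ideals of the quotient domain (this is \cite[Lemma 2.4]{FA04}); the Chase-type characterization of nonnil-coherence in Proposition \ref{nonn-coh}; and the matching characterization of coherent domains via intersections of finitely generated ideals (\cite[Theorem 3.7.6]{fk16}). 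The two points that make your reduction work are both sound: in the strongly $\phi$-ring $\phi(R)$ every non-nilpotent element is regular, so the annihilator condition in Proposition \ref{nonn-coh}(2) is vacuous; and the ``collapse'' $(b_1,\dots,b_k)+\Nil(S)=(b_1,\dots,b_k)$ whenever some $b_i$ is non-nilpotent is precisely the divided-prime property, which is also what guarantees that preimages of finitely generated nonzero ideals are finitely generated. The only step you should cite or spell out rather than call standard is the isomorphism $\phi(R)/\Nil(\phi(R))\cong R/\Nil(R)$: it does hold (the kernel of $R\to\phi(R)$ consists of elements annihilated by a non-nilpotent element, hence lies in the prime $\Nil(R)$, and nilpotents correspond to nilpotents), and it is the reason the reduction to the strongly $\phi$-ring case loses nothing; it is the same fact underlying Badawi's result, quoted in the introduction, that nonnil-Noetherianness of $R$, of $\phi(R)$, and Noetherianness of $R/\Nil(R)$ are equivalent. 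With that reference supplied, your argument is a complete substitute for the citation.
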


\begin{proposition}\label{non-c is phi-c}
A $\phi$-ring $R$ is nonnil-coherent if and only if $R$ is $\phi$-coherent and $(0:_Rr)$ is a finitely generated ideal for any  non-nilpotent element $r\in R$.
\end{proposition}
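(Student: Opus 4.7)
The plan is to reduce both implications to the characterization of nonnil-coherence given in Proposition \ref{nonn-coh}(2), combined with Proposition \ref{phi-coh} (which identifies $\phi$-coherence with coherence of the domain $R/\Nil(R)$). The bridge between the two rings is the divided prime property, which forces $\Nil(R)\subseteq (a)$ for every non-nilpotent $a\in R$; in particular, every finitely generated nonnil ideal of $R$ already contains $\Nil(R)$. I will use this repeatedly.

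For the ``only if'' direction, assume $R$ is nonnil-coherent. The annihilator condition is immediate from Proposition \ref{nonn-coh}(2); it remains to show that $R/\Nil(R)$ is coherent. Given a finitely generated ideal $\overline{J}$ of $R/\Nil(R)$, I would lift it to $J=(a_1,\dots,a_n)$ in $R$ with $a_1$ non-nilpotent (the case $\overline{J}=0$ being trivial). By nonnil-coherence, $J$ is finitely presented, say via $0\to K\to R^n\to J\to 0$ with $K$ finitely generated. A short computation shows $\Nil(R)=\Nil(R)\cdot J$: every $x\in\Nil(R)\subseteq (a_1)$ has the form $y a_1$ with $y\in R$, and since $\Nil(R)$ is prime and $a_1\notin\Nil(R)$, one has $y\in\Nil(R)$. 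Tensoring the displayed sequence with $R/\Nil(R)$ therefore yields a presentation
\[
K/\Nil(R)K\ \longrightarrow\ (R/\Nil(R))^n\ \longrightarrow\ \overline{J}\ \longrightarrow\ 0,
\]
so $\overline{J}$ is finitely presented over $R/\Nil(R)$.

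For the ``if'' direction, I would verify condition (2) of Proposition \ref{nonn-coh}. Let $I_1,I_2$ be finitely generated nonnil ideals and pick non-nilpotent elements $a\in I_1$, $b\in I_2$. Then $ab\in I_1\cap I_2$ is non-nilpotent by primality of $\Nil(R)$, so $I_1\cap I_2$ is nonnil. Since $\Nil(R)\subseteq I_1\cap I_2$, the equality $(I_1\cap I_2)/\Nil(R)=\overline{I}_1\cap \overline{I}_2$ holds, and the right-hand side is finitely generated in the coherent domain $R/\Nil(R)$. Lift generators $\overline{c}_1,\dots,\overline{c}_k$, noting that the intersection of two nonzero ideals in a domain is nonzero, so we may arrange $c_1$ non-nilpotent. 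Then $\Nil(R)\subseteq (c_1)\subseteq (c_1,\dots,c_k)$, whence $I_1\cap I_2=(c_1,\dots,c_k)$ is finitely generated, and Proposition \ref{nonn-coh} applies.

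The only delicate point is that $\Nil(R)$ need not be finitely generated, so one cannot naively transfer finite presentations between $R$ and $R/\Nil(R)$ along the short exact sequence $0\to\Nil(R)\to R\to R/\Nil(R)\to 0$. The divided prime hypothesis is what absorbs this obstacle in both directions, first by collapsing $\Nil(R)\otimes_R J$ to zero via $\Nil(R)\cdot J=\Nil(R)$, and second by swallowing $\Nil(R)$ inside a single principal ideal generated by a non-nilpotent lift.
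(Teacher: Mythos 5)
Your proof is correct, and the two directions compare differently with the paper's. The ``if'' direction is essentially the paper's argument: you verify condition (2) of Proposition \ref{nonn-coh} by passing intersections of finitely generated nonnil ideals down to the coherent domain $R/\Nil(R)$ and back, proving by hand (via the divided-prime property) the ideal correspondence that the paper simply cites from \cite[Lemma 2.4]{FA04}. The ``only if'' direction is genuinely different. The paper again reduces everything to intersections: it shows that the intersection of two finitely generated nonzero ideals of $R/\Nil(R)$ is finitely generated and then invokes the characterization of coherent domains by this property (\cite[Theorem 3.7.6]{fk16}). You instead prove coherence of $R/\Nil(R)$ directly from the definition, by descending a finite presentation $0\to K\to R^n\to J\to 0$ of a lifted nonnil ideal along $-\otimes_R R/\Nil(R)$; the identity $\Nil(R)\cdot J=\Nil(R)$ (which you verify correctly, using that $\Nil(R)$ is a divided prime and $J$ contains a non-nilpotent element) gives $J\otimes_R R/\Nil(R)\cong J/\Nil(R)J=J/\Nil(R)=\overline{J}$, so right-exactness of the tensor product delivers a finite presentation of $\overline{J}$ over $R/\Nil(R)$. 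Your route is more self-contained, needing neither the intersection criterion for coherent domains nor Proposition \ref{nonn-coh}(2) in that direction, at the modest cost of the computation $\Nil(R)J=\Nil(R)$; the paper's route is shorter on the page but leans on two external citations. Both are sound.
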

\begin{proof}  Let  $R$ be a nonnil-coherent ring and $r$ a non-nilpotent element in $R$, then $(0:_Rr)$ is  finitely generated by Proposition \ref{nonn-coh}. Let $I/\Nil(R)$ and $J/\Nil(R)$ be finitely generated non-zero ideals of $R/\Nil(R)$. By \cite[Lemma 2.4]{FA04}, $I$ and $J$ are finitely generated nonnil ideals of $R$. Thus by Proposition \ref{nonn-coh}, $I\cap J$ is a  finitely generated nonnil ideal. By \cite[Lemma 2.4]{FA04} again, $I/\Nil(R)\cap J/\Nil(R)=(I\cap J)/\Nil(R)$ is a finitely generated non-zero ideal of $R/\Nil(R)$. Then $R/\Nil(R)$ is a coherent domain by \cite[Theorem 3.7.6]{fk16}. Thus $R$ is $\phi$-coherent by Proposition \ref{phi-coh}.

On the other hand, suppose $R$ is a $\phi$-coherent ring. Then $R/\Nil(R)$ is a coherent domain.   Let $I$ and $J$ be finitely generated nonnil ideals of $R$. Then $I/\Nil(R)$ and $J/\Nil(R)$ are finitely generated non-zero ideals of $R/\Nil(R)$. Thus $(I\cap J)/\Nil(R)=I/\Nil(R)\cap J/\Nil(R)$ is a finitely generated non-zero ideal of $R/\Nil(R)$. Consequently,  $I\cap J$ is a finitely generated nonnil ideal of $R$ by \cite[Lemma 2.4]{FA04}. Since $(0:_Rr)$ is a finitely generated ideal for any  non-nilpotent element $r\in R$, $R$ is nonnil-coherent by Proposition \ref{nonn-coh}.
\end{proof}

By Proposition \ref{non-c is phi-c}, any nonnil-coherent ring is $\phi$-coherent. In order to give a  $\phi$-coherent ring that is not nonnil-coherent, we recall from \cite{H88}  the idealization construction $R(+)M$ where $M$ is an $R$-module. Let  $R(+)M=R\oplus M$ as an $R$-module, and define
\begin{enumerate}
    \item ($r,m$)+($s,n$)=($r+s,m+n$),
    \item  ($r,m$)($s,n$)=($rs,sm+rn$).
\end{enumerate}
Under this construction, $R(+)M$ becomes a commutative ring with identity $(1,0)$.

\begin{lemma}\label{ideal-con-finite}
Let $R(+)M$ be the idealization construction defined as above, $N$ an $R$-submodule of $M$.  Then $0(+)N$ is an ideal of $R(+)M$. Moreover, $0(+)N$ is a finitely generated ideal of  $R(+)M$ if and only if $N$ is finitely generated over $R$.
\end{lemma}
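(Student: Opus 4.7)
The plan is to verify the ideal condition directly from the multiplication rule of the idealization, and then to establish the equivalence of finite generation in both directions by translating between generators of $N$ as an $R$-module and generators of $0(+)N$ as an ideal of $R(+)M$.

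First I would check that $0(+)N$ is an ideal. Closure under addition is immediate from coordinatewise addition. For closure under scalar multiplication by $R(+)M$, I would compute
\[
(r,m)\,(0,n) = (r\cdot 0,\; r n + m\cdot 0) = (0, rn),
\]
which lies in $0(+)N$ because $N$ is an $R$-submodule of $M$. This takes only a couple of lines.

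For the nontrivial direction, suppose $N = \langle n_1,\dots,n_k\rangle_R$. Then I would show that $\{(0,n_1),\dots,(0,n_k)\}$ generates $0(+)N$ as an ideal of $R(+)M$: an arbitrary element $(0,n)\in 0(+)N$ has $n=\sum r_i n_i$ for some $r_i\in R$, and by the computation above $\sum (r_i,0)(0,n_i) = (0,\sum r_i n_i) = (0,n)$. Conversely, if $0(+)N$ is finitely generated as an ideal of $R(+)M$, I would first observe that one may choose a finite generating set inside the ideal itself, hence of the form $(0,n_1),\dots,(0,n_k)$ with $n_i \in N$. Then for any $n\in N$,
\[
(0,n) = \sum_{i=1}^{k}(r_i,m_i)(0,n_i) = \Bigl(0,\sum_{i=1}^{k} r_i n_i\Bigr),
\]
so $n=\sum r_i n_i$ and $N=\langle n_1,\dots,n_k\rangle_R$.

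There is essentially no serious obstacle here: the only subtle point is justifying that a finite ideal-generating set of $0(+)N$ can be taken within $0(+)N$ (so that the first coordinates vanish), which is a standard observation about generating sets of ideals. Once this is noted, the argument reduces to a direct use of the idealization multiplication formula.
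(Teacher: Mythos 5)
Your proof is correct and follows essentially the same route as the paper, which simply cites the idealization literature for the ideal claim and declares the generator correspondence ``easy to verify''; you have supplied exactly the routine computations $(r,m)(0,n)=(0,rn)$ that the paper leaves implicit. The one point you flag as subtle --- that a finite generating set of $0(+)N$ may be taken inside $0(+)N$ --- is immediate, since any generating set of an ideal consists of its own elements, all of which here have the form $(0,n)$ with $n\in N$.
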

\begin{proof} It follows from \cite[Theorem 3.1]{DW09} that $0(+)N$ is an ideal of $R(+)M$. It is easy to verify that the $R(+)M$-ideal $0(+)N$ is generated by $\{(0,n_1),...,(0,n_t)\}$ if and only if $N$ is generated by $\{n_1,...,n_t\}$ over $R$.
\end{proof}

The following example shows that the condition ``$(0:_Rr)$ is a finitely generated ideal for any  non-nilpotent element $r\in R$'' in Proposition \ref{non-c is phi-c} cannot be removed.

\begin{example}\label{not non-coh}
Let $D$ be a coherent domain not a field, $Q$ its quotient field and $E=\bigoplus\limits_{i=1}^{\infty}Q/D$. Let $R=D(+)E$ be the idealization construction. Since $E$ is divisible and $\Nil(R)=0(+)E$ is a prime ideal, $R$ is a $\phi$-ring by \cite[Corollary 3.4]{DW09}. Note that $R/\Nil(R)\cong D$, and thus $R$ is $\phi$-coherent by Proposition \ref{phi-coh}.  Let $d$ be a non-zero non-unit element of $D$. Then $(d,0)$ is a non-nilpotent element in $R$. Then  one can easily check that $(0:_R(d,0))=0(+)\Ann_Ed$, where $\Ann_Ed=\bigoplus\limits_{i=1}^{\infty}(\frac{1}{d}+D)$. Since $\Ann_Ed$ is an infinitely generated $D$-module, $(0:_R(d,0))$ is an infinitely generated $R$-ideal by Lemma \ref{ideal-con-finite}. Thus $R$ is not nonnil-coherent by Proposition \ref{non-c is phi-c}.
\end{example}
\begin{remark}
Following from Badawi \cite{A03}, a $\phi$-ring $R$ is called a nonnil-Noetherian ring provided that each nonnil ideal of $R$ is finitely generated. They showed that a $\phi$-ring $R$ is nonnil-Noetherian  if and only if $\phi(R)$ is nonnil-Noetherian  (see \cite[Theorem 2.4]{A03}). However, Example \ref{not non-coh} shows the analogue of Badawi's result is not true for nonnil-coherent.  By Proposition \ref{phi-coh}, any nonnil-Noetherian ring is $\phi$-coherent. However, nonnil-Noetherian rings are not always nonnil-coherent. Indeed, let $D$ be a Noetherian domain not a field in Example \ref{not non-coh}, then $R$ is a nonnil-Noetherian ring but not nonnil-coherent.
\end{remark}

%\begin{proposition}\label{non-c is phi-c}
%Any  $\phi$-coherent ring is nonnil-coherent. Consequently, if a $\phi$-ring is coherent then it is $\phi$-coherent.
%\end{proposition}
%\begin{proof} Let $R$ be a $\phi$-coherent ring, then $R/\Nil(R)$ is a coherent domain.   Let $I$ and $J$ be finitely generated nonnil ideal of $R$. Then $I/\Nil(R)$ and $J/\Nil(R)$ be finitely generated non-zero ideal of $R/\Nil(R)$. Thus $(I\cap J)/\Nil(R)=I/\Nil(R)\cap J/\Nil(R)$ is a finitely generated non-zero ideal of $R/\Nil(R)$. Consequently,  $I\cap J$ is finitely generated nonnil ideal of $R$ by \cite[Lemma 2.4]{FA04}. Let $r$ be a non-nilpotent element in $R$. Then $\langle r+\Nil(R)\rangle$ is a finitely  generated ideal of $R/\Nil(R)$, and thus is finitely presented. Consider the exact sequence

%\end{proof}

Let $R$ be an $\NP$-ring and $M$  an $R$-module. Set
\begin{center}
$\phi$-$tor(M)=\{x\in M|Ix=0$ for some nonnil ideal $I$ of  $R \}$.
\end{center}
An $R$-module $M$ is said to be \emph{$\phi$-torsion} (resp., \emph{$\phi$-torsion free}) provided that  $\phi$-$tor(M)=M$ (resp., $\phi$-$tor(M)=0$). %Certainly,  the class of $\phi$-torsion modules is closed under submodules, quotients, direct sums and direct limits.
The classes of $\phi$-torsion modules and $\phi$-torsion free modules constitute a hereditary torsion theory of finite type (see \cite{S79}). The rest of this section will give some module-theoretic characterization of nonnil-coherent rings. Recall that an $R$-module $M$ is called  \emph{$\phi$-flat} provided that $\Tor^R_1(T,M)=0$ for any  $\phi$-torsion module $T$ (see \cite{ZWT13}), and   is called  \emph{nonnil-injective} provided that $\Ext_R^1(T,M)=0$ for any  $\phi$-torsion module $T$ (see \cite{ZZ19}). Certainly, the class of $\phi$-flat modules is closed under pure sub-modules, extensions and direct limits; the class of nonnil-injective modules is closed under  direct products and extensions.

\begin{definition}
Let $R$ be an $\NP$-ring. An $R$-module $M$ is called  \emph{nonnil-FP-injective} provided that $\Ext_R^1(T,M)=0$ for  any finitely presented $\phi$-torsion module $T$.
\end{definition}

Note that the class of nonnil-FP-injective modules is closed under direct sums, direct products, extensions and pure sub-modules.

\begin{proposition}\label{flat-FP-injective}
Let $R$ be an $\NP$-ring, then the following assertions are equivalent:
\begin{enumerate}
    \item $M$ is $\phi$-flat;
      \item   $\Hom_R(M,E)$ is nonnil-injective for any injective module $E$;
     \item   $\Hom_R(M,E)$ is nonnil-FP-injective for any injective module $E$;
    \item  if $E$ is an injective cogenerator, then $\Hom_R(M,E)$ is nonnil-injective.
       \item  if $E$ is an injective cogenerator, then $\Hom_R(M,E)$ is nonnil-FP-injective.
\end{enumerate}
\end{proposition}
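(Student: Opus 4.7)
The plan is to reduce everything to a single adjoint isomorphism. For any $R$-module $T$ and any injective $R$-module $E$, picking a projective resolution $P_\bullet \to T$ and using that $\Hom_R(-,E)$ is exact together with the tensor-hom adjunction $\Hom_R(P_i,\Hom_R(M,E))\cong \Hom_R(P_i\otimes_R M,E)$, we obtain
\[
\Ext^1_R(T,\Hom_R(M,E))\;\cong\;\Hom_R(\Tor_1^R(T,M),E).
\]
Every claim in the proposition will be controlled by this identity.

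With the identity in hand, $(1)\Rightarrow(2)$ is immediate: if $M$ is $\phi$-flat, then $\Tor_1^R(T,M)=0$ for each $\phi$-torsion $T$, so $\Ext^1_R(T,\Hom_R(M,E))=0$ and hence $\Hom_R(M,E)$ is nonnil-injective. The implications $(2)\Rightarrow(3)$, $(2)\Rightarrow(4)$, $(3)\Rightarrow(5)$, and $(4)\Rightarrow(5)$ are purely formal: a finitely presented $\phi$-torsion module is in particular $\phi$-torsion, and an injective cogenerator is in particular injective.

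The nontrivial direction is $(5)\Rightarrow(1)$. Fix an injective cogenerator $E$. For every finitely presented $\phi$-torsion module $T'$, the adjoint isomorphism combined with $(5)$ yields $\Hom_R(\Tor_1^R(T',M),E)=0$, and since $E$ is a cogenerator this forces $\Tor_1^R(T',M)=0$. To upgrade from finitely presented to arbitrary $\phi$-torsion modules I will appeal to the fact, recalled just before the proposition, that the $\phi$-torsion theory is of finite type (in the sense of \cite{S79}); consequently every $\phi$-torsion module $T$ can be written as a filtered colimit $T=\varinjlim T'_i$ of finitely presented $\phi$-torsion modules. Since $\Tor$ commutes with direct limits, $\Tor_1^R(T,M)=\varinjlim\Tor_1^R(T'_i,M)=0$, which gives $(1)$.

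The expected obstacle is neither the adjoint isomorphism nor the cogenerator step, but the final reduction: expressing an arbitrary $\phi$-torsion module as a filtered colimit of finitely presented $\phi$-torsion modules. This is precisely what the \emph{finite type} property of the $\phi$-torsion theory is designed to deliver, and it is the reason that property was emphasized in the paragraph immediately preceding the proposition.
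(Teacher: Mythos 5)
Your argument is correct, and it matches the paper's proof for everything except the last implication, where you take a genuinely different route. The paper also rests the whole proposition on the natural isomorphism $\Ext^1_R(T,\Hom_R(M,E))\cong\Hom_R(\Tor_1^R(T,M),E)$ and disposes of $(1)\Rightarrow(2)$ and the downward implications exactly as you do. For $(5)\Rightarrow(1)$, however, the paper applies the hypothesis only to the cyclic modules $R/I$ with $I$ a finitely generated nonnil ideal (these are finitely presented and $\phi$-torsion), deduces $\Tor_1^R(R/I,M)=0$ from the cogenerator property, and then simply cites the criterion of Zhao--Wang--Tang that vanishing of $\Tor_1^R(R/I,M)$ for all finitely generated nonnil ideals $I$ already implies $\phi$-flatness. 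You instead keep all finitely presented $\phi$-torsion modules in play and perform the passage to arbitrary $\phi$-torsion modules yourself via a filtered-colimit argument. Both work; the paper's version is shorter because it outsources the reduction, while yours is more self-contained. The one place where you should be more careful is the assertion that finite type of the $\phi$-torsion theory ``delivers'' that every $\phi$-torsion module is a filtered colimit of finitely presented $\phi$-torsion modules: finite type literally says only that the filter of nonnil ideals has a basis of finitely generated ones, so this colimit statement needs a short proof (for $x\in T$ choose a finitely generated nonnil $I\subseteq(0:_Rx)$, so $Rx$ is a quotient of the finitely presented $\phi$-torsion module $R/I$; a finitely generated $\phi$-torsion module is then a quotient of some $(R/I)^n$ and hence the filtered colimit of its finitely presented $\phi$-torsion quotients; finally assemble via the comma category of finitely presented $\phi$-torsion modules over $T$) or a precise reference. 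Alternatively, you could simply truncate your argument at the point where you have $\Tor_1^R(R/I,M)=0$ for all finitely generated nonnil $I$ and invoke the same cited criterion the paper uses.
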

\begin{proof}
$(1)\Rightarrow (2)$: Let $T$ be a  $\phi$-torsion $R$-module and $E$ an injective $R$-module. Since $M$ is $\phi$-flat, $\Ext_R^1(T,\Hom_R(M,E))\cong\Hom_R(\Tor_1^R(T,M),E)=0$.  Thus $\Hom_R(M,E)$ is  nonnil-injective.

$(2)\Rightarrow (3)\Rightarrow (5)$ and $(2)\Rightarrow (4)\Rightarrow (5)$: Trivial.

$(5)\Rightarrow (1)$:  Let $I$ be a finitely generated nonnil ideal of $R$ and $E$ an injective cogenerator. Since $\Hom_R(M,E)$ is nonnil-FP-injective,  $\Hom_R(\Tor_1^R(R/I,M),E)\cong \Ext_R^1(R/I,\Hom_R(M,E))=0$. Since $E$ is an injective cogenerator, $\Tor_1^R(R/I,M)=0$. Thus $M$ is $\phi$-flat by \cite[Theorem 3.2]{ZWT13}.
\end{proof}

Bacem and Ali \cite{BA16} generalized the Chase Theorem for coherent rings to that for nonnil-coherent rings.
\begin{theorem}\cite[Theorem 2.4]{BA16} \label{non-coh-flat} Let $R$ be a $\phi$-ring. The following statements are equivalent:
\begin{enumerate}
    \item  $R$ is nonnil-coherent;
    \item any direct product of $\phi$-flat $R$-modules is $\phi$-flat;
   \item  for any indexing set $I$, any $R$-module $R^I$ is $\phi$-flat.
\end{enumerate}
\end{theorem}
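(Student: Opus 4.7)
The plan is to imitate Chase's classical characterization of coherent rings, substituting $\phi$-flat for flat and restricting attention to finitely generated nonnil ideals. The implication $(2)\Rightarrow(3)$ is immediate: $R$ is free, hence $\phi$-flat, so any power $R^I$ is $\phi$-flat by (2).

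For $(1)\Rightarrow(2)$, let $\{M_\alpha\}$ be a family of $\phi$-flat modules and let $J$ be a finitely generated nonnil ideal. Nonnil-coherence gives that $J$ is finitely presented, so by Lenzing's lemma the canonical map $J\otimes_R\prod_\alpha M_\alpha \to \prod_\alpha(J\otimes_R M_\alpha)$ is bijective. Each $J\otimes_R M_\alpha\to M_\alpha$ is injective, since $\Tor_1^R(R/J,M_\alpha)=0$ by $\phi$-flatness, and products of injections are injective. Composing, $J\otimes_R\prod M_\alpha\hookrightarrow\prod M_\alpha$, so $\Tor_1^R(R/J,\prod M_\alpha)=0$ for every finitely generated nonnil $J$, and \cite[Theorem 3.2]{ZWT13} gives that $\prod M_\alpha$ is $\phi$-flat.

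For the harder direction $(3)\Rightarrow(1)$, fix a finitely generated nonnil ideal $J$. Since $\Nil(R)$ is a divided prime, every nilpotent element lies in $(a)$ whenever $a\in R$ is non-nilpotent, so one may assume $J=\langle a_1,\dots,a_n\rangle$ with each $a_i$ non-nilpotent. It suffices to show that the syzygy $K$ in $0\to K\to R^n\to J\to 0$ is finitely generated. First, for every set $I$ the canonical map $\psi_J:J\otimes_R R^I\to J^I$ is an isomorphism: surjectivity follows from finite generation of $J$, while injectivity follows because $R/J$ is $\phi$-torsion, hence $\Tor_1^R(R/J,R^I)=0$, so $J\otimes_R R^I\hookrightarrow R^I$ factors through $J^I\hookrightarrow R^I$. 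Tensoring the resolution with $R^I$ and using $\Tor_1^R(R^n,R^I)=0$, a comparison with the left-exact product sequence $0\to K^I\to(R^n)^I\to J^I\to 0$ forces the image of $K\otimes_R R^I\to(R^n)^I$ to equal $K^I$, whence the natural map $K\otimes_R R^I\twoheadrightarrow K^I$ is surjective for every $I$. Specializing $I=K$ and lifting the diagonal element $(k)_{k\in K}\in K^K$ yields a finite generating set for $K$.

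The main obstacle is the diagram chase in $(3)\Rightarrow(1)$: one must convert the homological hypothesis $\Tor_1^R(R/J,R^I)=0$ into the concrete surjection $K\otimes_R R^I\twoheadrightarrow K^I$ via the intermediate isomorphism $\psi_J$, and then invoke the standard criterion that surjectivity of $N\otimes_R R^I\to N^I$ for all sets $I$ forces $N$ to be finitely generated. The preliminary reduction to non-nilpotent generators is a minor but necessary use of the divided prime hypothesis on $\Nil(R)$.
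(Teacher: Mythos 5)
Your proposal is correct. Note that the paper itself supplies no proof of this statement: it is quoted verbatim from \cite[Theorem 2.4]{BA16}, so there is nothing internal to compare against; what you have written is a legitimate, self-contained adaptation of Chase's theorem to the nonnil setting. The two key points are handled properly: in $(1)\Rightarrow(2)$ you combine Lenzing's isomorphism $J\otimes_R\prod M_\alpha\cong\prod(J\otimes_RM_\alpha)$ for the finitely presented nonnil ideal $J$ with the criterion of \cite[Theorem 3.2]{ZWT13} that $\phi$-flatness is detected by $\Tor_1^R(R/J,-)=0$ over finitely generated nonnil ideals; in $(3)\Rightarrow(1)$ you correctly use that $R/J$ is $\phi$-torsion to get injectivity of $\psi_J$, which is exactly what forces the image of $K\otimes_RR^I$ in $(R^n)^I$ to be all of $K^I$ rather than a possibly smaller submodule, and the diagonal-element trick then finishes. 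Two cosmetic remarks: the reduction to non-nilpotent generators of $J$ via the divided-prime hypothesis is harmless but not actually needed (the syzygy argument never uses it --- all that matters is that $J$ is nonnil, so that $R/J$ is $\phi$-torsion); and the phrase ``using $\Tor_1^R(R^n,R^I)=0$'' is a red herring --- you only need right-exactness of $-\otimes_RR^I$ on the presentation together with the isomorphisms on the middle and right columns, since left-exactness of the top row is neither available nor required.
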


\begin{lemma}\label{sub-fp} Let $R$ be a nonnil-coherent ring. Let $T$ be a finitely presented $\phi$-torsion module  generated by  $\{t_1,...,t_{k},t_{k+1}\} $ with $k\geq 1$ and $T_k$ the submodule of $T$ generated by $\{t_1,...,t_{k}\}$. Then $T_k$ is finitely presented.
\end{lemma}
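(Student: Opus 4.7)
The plan is to exploit the short exact sequence $0\to T_k\to T\to T/T_k\to 0$, to identify the quotient with $R/I$ for a finitely generated nonnil ideal $I$, and then to combine the finite presentation of $T$ with the defining property of nonnil-coherence.

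First, $T/T_k$ is generated by the single class $\overline{t_{k+1}}$, so $T/T_k\cong R/I$ where $I=\ann_R(t_{k+1}+T_k)=\{r\in R:rt_{k+1}\in T_k\}$. If $I=R$ then $T_k=T$ is already finitely presented, so I may assume $I$ is a proper ideal. Because $T$ is $\phi$-torsion, $t_{k+1}$ is annihilated by some nonnil ideal $J$, and the inclusion $J\subseteq I$ forces $I$ itself to be nonnil.

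Next, I would fix a finite presentation $R^m\xrightarrow{\alpha}R^{k+1}\xrightarrow{\beta}T\to 0$ in which $\beta$ sends the $i$-th standard basis vector to $t_i$, and set $K=\ker\beta=\im\alpha$, a finitely generated submodule of $R^{k+1}$. Let $\pi\colon R^{k+1}\to R$ be the projection onto the last coordinate, so that $\ker\pi$ coincides with the inclusion $R^k\hookrightarrow R^{k+1}$ via the first $k$ coordinates. The key observation is that $\pi(K)=I$: an element $r$ lies in $\pi(K)$ precisely when it appears as the last coordinate of a relation $\sum_{i=1}^{k}r_it_i+rt_{k+1}=0$, which is exactly the condition $rt_{k+1}\in T_k$. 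Hence $I$ inherits finite generation from $K$, and since $R$ is nonnil-coherent, this finitely generated nonnil ideal is finitely presented.

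Finally, the composite $R^k\hookrightarrow R^{k+1}\xrightarrow{\beta}T_k$ is surjective with kernel $K\cap R^k$, and restricting $\pi$ to $K$ yields a short exact sequence $0\to K\cap R^k\to K\to I\to 0$ in which $K$ is finitely generated and $I$ is finitely presented. A standard application of Schanuel's lemma then forces $K\cap R^k$ to be finitely generated, exhibiting the desired finite presentation of $T_k$. The main obstacle is the identification $\pi(K)=I$, which is what converts the external $\phi$-torsion hypothesis into internal finite-generation data that can be fed to nonnil-coherence; once this is noticed the rest is routine bookkeeping.
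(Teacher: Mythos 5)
Your proposal is correct and follows essentially the same route as the paper: both identify $T/T_k$ with $R/I$ for a finitely generated nonnil ideal $I$, invoke nonnil-coherence to make $I$ finitely presented, and then transfer finite presentation back to $T_k$ through a finitely generated free cover of $T$. The only difference is bookkeeping --- you realize $I=\pi(K)$ and $T_k\cong R^k/(K\cap R^k)$ explicitly in coordinates, where the paper packages the same data into a pullback diagram --- so the underlying argument is identical.
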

\begin{proof}
Note $T/T_k=(T_k+Rt_k)/T_k\cong Rt_k/(T_k\cap Rt_k)\cong R/I$ where $I=(0:_Rt_k+T_k\cap Rt_k)$ is an ideal of $R$. Since $T$ is finitely presented $\phi$-torsion and  $T_k$ is finitely generated, then $I$
 is a finitely generated nonnil ideal of $R$ by \cite[Theorem 2.1.2]{g}. Since  $R$ is nonnil-coherent, then $I$ is finitely presented. Consider the following Pull-back diagram:
$$\xymatrix@R=20pt@C=25pt{ &  0\ar[d]&0\ar[d]&&\\
&  K\ar[d]\ar@{=}[r]&K\ar[d]&&\\
0 \ar[r]^{} & X\ar[d]\ar[r]^{} &F \ar[d]\ar[r]^{} &R/I\ar@{=}[d]\ar[r]^{} &  0\\
0 \ar[r]^{} & T_k\ar[d]\ar[r]^{} & T \ar[d]\ar[r]^{} &R/I\ar[r]^{} &  0\\
& 0 &0 &&\\}$$
where $F$  is finitely generated free. Then $K$ is finitely generated by \cite[Theorem 2.1.2(3)]{g}. Since $I$ is finitely presented, $X$ is finitely presented by \cite[Theorem 2.1.2(3)]{g} again.  Thus $T_k$ is finitely presented by \cite[Theorem 2.1.2(2)]{g}.
\end{proof}

In 1970, Stenstr\"{o}m \cite{S70} obtained that a ring $R$ is coherent if and only if any direct limit of FP-injective modules is FP-injective. In 2008, Pinzon \cite{p08} showed that if $R$ is coherent, the class of FP-injective modules is (pre)covering. Recently, Dai and Ding \cite{DD18,DD19} showed that the converse of Pinzon's result also hold. The next result generalizes these results to nonnil-coherent rings.

\begin{theorem}\label{s-d-d-non}
Let $R$ be a $\phi$-ring. The following statements are equivalent:
\begin{enumerate}
    \item  $R$ is nonnil-coherent.
    \item The class of nonnil-FP-injective $R$-modules is closed under pure quotients.
   \item  The class of nonnil-FP-injective $R$-modules is closed under direct limits.
    \item  The class of nonnil-FP-injective $R$-modules is precovering.
    \item  The class of nonnil-FP-injective $R$-modules is covering.
\end{enumerate}
\end{theorem}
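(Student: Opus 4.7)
The plan is to run the cycle $(1)\Rightarrow(3)\Rightarrow(2)\Rightarrow(3)$, yielding both $(1)\Rightarrow(3)$ and $(2)\Leftrightarrow(3)$, and to close $(1)\Rightarrow(5)\Rightarrow(4)\Rightarrow(1)$. The Stenstr\"om-style filtration built from Lemma \ref{sub-fp} is the engine that converts the finiteness hypothesis $(1)$ into good homological behavior under direct limits.

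For $(1)\Rightarrow(3)$, let $T$ be a finitely presented $\phi$-torsion module generated by $\{t_1,\dots,t_n\}$, and set $T_k=\langle t_1,\dots,t_k\rangle$. Iterating Lemma \ref{sub-fp} shows each $T_k$ is finitely presented, so the cyclic quotient $T_k/T_{k-1}\cong R/I_k$ with $I_k=(T_{k-1}:_R t_k)$ is finitely presented; here $I_k$ is nonnil because some nonnil ideal kills $t_k\in T$, and by Proposition \ref{nonn-coh} nonnil-coherence upgrades this finitely generated nonnil ideal to a finitely presented one. Consequently each $R/I_k$ admits a projective resolution by finitely generated free modules up to degree two, so $\Ext_R^1(R/I_k,-)$ commutes with direct limits. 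Given a direct system $\{M_\lambda\}$ of nonnil-FP-injective modules, induction on $k$ applied to the long exact sequences coming from $0\to T_{k-1}\to T_k\to R/I_k\to 0$ yields $\Ext_R^1(T,\varinjlim_\lambda M_\lambda)=0$, proving $(3)$.

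For $(2)\Leftrightarrow(3)$, the implication $(2)\Rightarrow(3)$ uses the standard presentation $\varinjlim_\lambda M_\lambda\cong(\bigoplus_\lambda M_\lambda)/S$ of a filtered colimit as a pure quotient of a direct sum, combined with the closure of nonnil-FP-injective modules under direct sums. Conversely, $(3)$ together with the closure under direct products and pure submodules noted before Proposition \ref{flat-FP-injective} makes the class of nonnil-FP-injective modules a definable subcategory in the sense of Crawley--Boevey, hence automatically closed under pure epimorphic images, delivering $(2)$. For $(1)\Rightarrow(5)$, the class of nonnil-FP-injective modules equals $\mathcal{S}^{\perp_1}$ for the set $\mathcal{S}$ of isomorphism classes of finitely presented $\phi$-torsion modules, so Eklof--Trlifaj provides a complete cotorsion pair generated by a set; combined with closure under direct limits obtained from $(1)\Rightarrow(3)$, Enochs' theorem guarantees that the class is covering, and $(5)\Rightarrow(4)$ is trivial.

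The main obstacle will be $(4)\Rightarrow(1)$, which I plan to route through $(3)$. A Wakamatsu-type argument shows that a precovering class closed under extensions and direct summands must be closed under direct limits, recovering $(3)$; then $(3)\Rightarrow(1)$ adapts Stenstr\"om's direct-limit construction to our setting. Namely, if some finitely generated nonnil ideal $I$ were not finitely presented, the kernel $K$ of any presentation $R^n\to I\to 0$ would be a directed union $\varinjlim K_\alpha$ of proper finitely generated submodules; one then builds a direct system of nonnil-FP-injective modules from character duals of the finitely presented quotients $R^n/K_\alpha$ whose limit has nontrivial $\Ext_R^1$ with the finitely presented $\phi$-torsion module $R/I$, contradicting $(3)$. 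The delicate point is to keep the test module $R/I$ inside the $\phi$-torsion class and to ensure the character-module construction produces genuinely nonnil-FP-injective objects, so that the hypothesis $(3)$ actually applies.
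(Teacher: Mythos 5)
Your implication $(1)\Rightarrow(3)$ is essentially the paper's own argument (induction on the number of generators of $T$ via Lemma \ref{sub-fp} and the fact that $\Ext^1_R(R/I_k,-)$ commutes with direct limits when $I_k$ is finitely presented), and your $(2)\Leftrightarrow(3)$ via definability is sound. The problem lies in how you recover $(1)$ from the (pre)covering conditions. The assertion that ``a precovering class closed under extensions and direct summands must be closed under direct limits'' is false: over any non-perfect ring, e.g.\ $\mathbb{Z}$, the projective modules form a precovering class closed under extensions and direct summands, yet $\mathbb{Q}=\varinjlim\mathbb{Z}$ is not projective. Wakamatsu's lemma controls the kernel of a cover; it says nothing about closure under direct limits. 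Since $(4)\Rightarrow(3)$ is exactly the hard Dai--Ding-type implication, this is a genuine gap, not a routine repair. (The paper sidesteps it by quoting \cite[Lemma 3.4]{ZWQ20} for the whole block $(2)\Leftrightarrow(3)\Leftrightarrow(4)\Leftrightarrow(5)$.)

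Two further problems. For $(1)\Rightarrow(5)$, Eklof--Trlifaj applied to the set $\mathcal{S}$ of finitely presented $\phi$-torsion modules gives special $\mathcal{S}^{\perp}$-\emph{preenvelopes} and special ${}^{\perp}(\mathcal{S}^{\perp})$-precovers; it does not make the right-hand class $\mathcal{S}^{\perp}$ precovering, so Enochs' theorem has nothing to upgrade. The correct tool is the El Bashir/Holm--J{\o}rgensen result that a class closed under direct sums and pure epimorphic images is (pre)covering, which your statement $(2)$ would feed directly; as written, though, the step is wrong. Finally, your sketch of $(3)\Rightarrow(1)$ does not work as described: the character modules $(R^n/K_\alpha)^+$ form an \emph{inverse} system rather than a direct one, and there is no reason for them to be nonnil-FP-injective (that would require $\Tor^R_1(T,R^n/K_\alpha)=0$ for all finitely presented $\phi$-torsion $T$), so hypothesis $(3)$ need not apply to them. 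The paper's argument here is different and self-contained: assuming $(3)$, any homomorphism $I\to\varinjlim M_i$ extends to $R\to\varinjlim E(M_i)$ because $\Ext^1_R(R/I,\varinjlim E(M_i))=0$, and then factors through a single $M_j$ by \cite[Lemma 2.13]{gt}; hence $\varinjlim\Hom_R(I,M_i)\to\Hom_R(I,\varinjlim M_i)$ is surjective, and writing $I$ itself as a direct limit of finitely presented modules exhibits $I$ as a direct summand of one of them, whence $I$ is finitely presented.
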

\begin{proof} $(1)\Rightarrow (3)$: Let $\{M_i\}_{i\in\Gamma}$ be a direct system of nonnil-FP-injective $R$-modules. Let $T$ be a finitely presented $\phi$-torsion module  generated by $n$ elements $\{t_1,...,t_n\}$. We will prove $\Ext_R^1(T,\lim\limits_{\longrightarrow} M_i)=0$ by induction on $n$. Denote $M=\lim\limits_{\longrightarrow} M_i$. If $n=1$, $T=Rt_1$. Then there exists an exact sequence $0\rightarrow (0:_Rt_1)\rightarrow R\rightarrow Rt_1\rightarrow 0$. Since $T$ is finitely presented $\phi$-torsion, then $(0:_Rt_1)$ is a finitely generated nonnil-ideal of $R$. Consider the following commutative diagrams with exact rows.

$$\xymatrix@R=20pt@C=20pt{
     \lim\limits_{\longrightarrow } \Hom_R(R,M_i) \ar[d]_{\varphi_R}\ar[r]^{} & \lim\limits_{\longrightarrow }\Hom_R((0:_Rt_1),M_i)  \ar[r]^{}\ar[d]^{\varphi_{(0:_Rt_1)}}& \lim\limits_{\longrightarrow }\Ext^1_R(Rt_1,M_i) \ar[r]^{}\ar[d]^{\varphi^1_{Rt_1}}&0 \\
    \Hom_R(R,\lim\limits_{\longrightarrow } M_i)  \ar[r]^{} &\Hom_R((0:_Rt_1),\lim\limits_{\longrightarrow } M_i)\ar[r]^{} & \Ext^1_R(Rt_1,\lim\limits_{\longrightarrow } M_i) \ar[r]^{} &0.
}$$
Since $R$ is nonnil-coherent, $(0:_Rt_1)$ is finitely presented. Then  $\varphi_{(0:_Rt_1)}$ is an isomorphism, and thus $\varphi^1_{Rt_1}$ is an isomorphism as $\varphi_R$  is an isomorphism. Consequently,  $\Ext_R^1(Rt_1,\lim\limits_{\longrightarrow} M_i)=0$.  Let $T$ be a finitely presented $\phi$-torsion module  generated by  $\{t_1,...,t_{k},t_{k+1}\}$ and $T_k$ the $\phi$-torsion submodule of $T$ generated by $\{t_1,...,t_{k}\}$.  Then $T_k$ is finitely presented $\phi$-torsion by Lemma \ref{sub-fp}. Consider the exact sequence $0\rightarrow T_k\rightarrow T\rightarrow R/I\rightarrow 0$ where $I=(0:_Rt_k+T_k\cap Rt_k)$. By the proof of Lemma \ref{sub-fp}, $R/I$ is finitely presented $\phi$-torsion. We have a long exact sequence $$\Ext_R^1(R/I,\lim\limits_{\longrightarrow} M_i)\rightarrow \Ext_R^1(T,\lim\limits_{\longrightarrow} M_i)\rightarrow \Ext_R^1(T_k,\lim\limits_{\longrightarrow} M_i).$$
By induction, $\Ext_R^1(T_k,\lim\limits_{\longrightarrow} M_i)=\Ext_R^1(R/I,\lim\limits_{\longrightarrow} M_i)=0$, thus $\Ext_R^1(T,\lim\limits_{\longrightarrow} M_i)=0$. Consequently, $\lim\limits_{\longrightarrow} M_i$ is nonnil-FP-injective.

$(3)\Rightarrow (1)$: Let $I$ be a finitely generated nonnil ideal,  $\{M_i\}_{i\in\Gamma}$  a direct system of $R$-modules. Let $\alpha: I\rightarrow \lim\limits_{\longrightarrow }M_i$ be a homomorphism. For any $i\in \Gamma$, $E(M_i)$ is the injective envelope of $M_i$. Then $E(M_i)$ is nonnil-FP-injective. By (3), there exists an $R$-homomorphism $\beta:R\rightarrow \lim\limits_{\longrightarrow }E(M_i)$ such that the following  diagram commutes:
$$\xymatrix@R=20pt@C=25pt{
0\ar[r]^{}&I\ar[r]^{}\ar[d]_{\alpha}&  R\ar[r]^{}\ar@{.>}[d]^{\beta} & R/I\ar@{.>}[d]^{}\ar[r]^{} &0\\
0\ar[r]^{}&{\lim\limits_{\longrightarrow }}M_i \ar[r]^{}&{\lim\limits_{\longrightarrow }}E(M_i)  \ar[r]^{} &{\lim\limits_{\longrightarrow }}E(M_i)/M_i\ar[r]^{} & 0.\\}$$
Thus, by \cite[Lemma 2.13]{gt}, there exists $j\in \Gamma$, such that $\beta$ can factor through $R\xrightarrow{\beta_j} E(M_j)$. Consider the following commutative diagram:
$$\xymatrix@R=20pt@C=25pt{
0\ar[r]^{}&I\ar[r]^{}\ar@{.>}[d]_{\alpha_j}&  R\ar[r]^{}\ar[d]^{\beta_j} & R/I\ar@{.>}[d]^{}\ar[r]^{} &0\\
0\ar[r]^{}&M_j \ar[r]^{}&E(M_j)  \ar[r]^{} &E(M_j)/M_j\ar[r]^{} & 0.\\}$$
Since the composition $I\rightarrow R\rightarrow E(M_j)\rightarrow E(M_j)/M_j$ becomes to be $0$ in the direct limit, we can assume $I\rightarrow R\rightarrow E(M_j)$ can factor through some  $I\xrightarrow{\alpha_j} M_j$. Thus $\alpha$ can factor through $M_j$. Consequently, the natural homomorphism  $\lim\limits_{\longrightarrow } \Hom_R(I,M_i)\xrightarrow{\phi}  \Hom_R(I, \lim\limits_{\longrightarrow }M_i)$ is an epimorphism. Now suppose  $\{M_i\}_{i\in\Gamma}$ is a direct system of finitely generated $R$-modules such that $\lim\limits_{\longrightarrow } M_i=I$. Then there exists $f\in \Hom_R(I,M_j)$ with $j\in \Gamma$ such that the identity map  $\Id_I= \phi(u_j(f))$ where $u_j$ is the natural homomorphism $\Hom_R(I,M_j)\rightarrow \lim\limits_{\longrightarrow } \Hom_R(I,M_i)$. Then $I$ is a direct summand of $M_j$, and thus $I$ is finitely generated. It follows from \cite[Section 24.9, Section 24.10]{w} that  $I$ is finitely presented.

$(2)\Leftrightarrow (3)\Leftrightarrow (4)\Leftrightarrow (5)$: Follows from \cite[Lemma 3.4]{ZWQ20}.
\end{proof}

In 1993, Chen and Ding in \cite{c93} showed that a ring $R$ is coherent if and only if $\Hom_R(M,E)$ is flat for any absolutely pure $R$-module $M$ and injective $R$-module $E$ if and only if $\Hom_R(M,E)$ is flat for any injective $R$-modules $M$ and $E$. We also generalize this result to nonnil-coherent rings.

\begin{theorem}\label{phi-coh-fp}
 Let $R$ be a $\phi$-ring. The following statements are equivalent:
\begin{enumerate}
    \item $R$ is nonnil-coherent;
    \item   $\Hom_R(M,E)$ is $\phi$-flat for any  nonnil-FP-injective module $M$  and any injective module $E$;
   \item  $\Hom_R(M,E)$ is $\phi$-flat for any  nonnil-injective module $M$  and any injective module $E$;
       \item$\Hom_R(\Hom_R(M,E_1),E_2)$ is $\phi$-flat for any $\phi$-flat module $M$  and any injective modules $E_1,\ E_2$;
         \item if $E_1$ and $E_2$ are injective cogenerators, then $\Hom_R(\Hom_R(M,E_1),E_2)$ is $\phi$-flat for any $\phi$-flat module $M$.

\end{enumerate}
\end{theorem}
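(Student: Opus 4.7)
The plan is to prove the equivalences by closing the cycle $(1) \Rightarrow (2) \Rightarrow (3) \Rightarrow (4) \Rightarrow (5) \Rightarrow (1)$.

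For $(1) \Rightarrow (2)$: under nonnil-coherence, $R/I$ is finitely presented for every finitely generated nonnil ideal $I$, so the standard Hom-Tor interchange $\Tor_1^R(R/I, \Hom_R(M,E)) \cong \Hom_R(\Ext^1_R(R/I, M), E)$ (valid because the first argument is finitely presented and $E$ is injective) vanishes by nonnil-FP-injectivity of $M$; \cite[Theorem 3.2]{ZWT13} then concludes that $\Hom_R(M,E)$ is $\phi$-flat.

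The implications $(2) \Rightarrow (3) \Rightarrow (4) \Rightarrow (5)$ are largely routine: $(2) \Rightarrow (3)$ is immediate since nonnil-injective modules form a subclass of nonnil-FP-injective modules. For $(3) \Rightarrow (4)$, Proposition~\ref{flat-FP-injective} yields that $\Hom_R(M, E_1)$ is nonnil-injective whenever $M$ is $\phi$-flat and $E_1$ is injective, and then (3) applies to this nonnil-injective module. The step $(4) \Rightarrow (5)$ is a trivial specialization to the subclass of injective cogenerators.

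The main step is $(5) \Rightarrow (1)$. By Theorem~\ref{non-coh-flat}, it suffices to show that any direct product of $\phi$-flat modules is $\phi$-flat. I would fix the injective cogenerator $E = \Hom_{\mathbb{Z}}(R, \mathbb{Q}/\mathbb{Z})$ of $R$-Mod so that $\Hom_R(-, E)$ coincides with the $\mathbb{Z}$-character functor $(-)^+$. The plan combines two standard ingredients: the canonical pure embedding $N \hookrightarrow N^{++}$ valid for every $R$-module $N$, together with closure of $\phi$-flat modules under pure submodules (noted after the definitions); and hypothesis (5), which directly gives $\phi$-flatness of $M^{++}$ for every $\phi$-flat $M$. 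For a family $\{M_\alpha\}$ of $\phi$-flat modules, the product of pure embeddings yields $\prod M_\alpha \hookrightarrow \prod M_\alpha^{++}$ purely, so the task reduces to showing $\prod M_\alpha^{++}$ is $\phi$-flat---equivalently, using the identity $\prod M_\alpha^{++} = (\bigoplus M_\alpha^+)^+$, that the character module of $\bigoplus M_\alpha^+$ is $\phi$-flat.

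The principal obstacle is exactly this last move: hypothesis (5) directly provides $\phi$-flatness of $(\bigoplus M_\alpha)^{++}$, which is not the same as $\prod M_\alpha^{++}$. Bridging the mismatch requires extracting a Lambek-type dual of Proposition~\ref{flat-FP-injective}---namely, that $N^+$ is $\phi$-flat whenever $N$ is nonnil-FP-injective---from the Ext-Tor adjunction applied to finitely presented test modules, and then applying it to $\bigoplus M_\alpha^+$ (which is nonnil-FP-injective because each $M_\alpha^+$ is nonnil-injective by Proposition~\ref{flat-FP-injective} and because $\Ext^1_R$ against a finitely presented first argument commutes with direct sums). Carrying this out without circular appeal to nonnil-coherence, by exploiting only the character-module version of (5), is the technical heart I expect to require the most care.
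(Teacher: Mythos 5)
Your chain $(1)\Rightarrow(2)\Rightarrow(3)\Rightarrow(4)\Rightarrow(5)\Rightarrow(1)$ is the same architecture as the paper's, and the first four links are sound: $(1)\Rightarrow(2)$ via the Hom--Tor interchange for finitely presented $R/I$ matches the paper's argument, and $(2)\Rightarrow(3)\Rightarrow(4)\Rightarrow(5)$ go through exactly as you say using Proposition~\ref{flat-FP-injective}. The problem is $(5)\Rightarrow(1)$, where you correctly set up the reduction (show $\prod M_\alpha$ is $\phi$-flat, embed it purely into $\prod M_\alpha^{++}=(\bigoplus M_\alpha^{+})^{+}$) and correctly identify the mismatch with what $(5)$ actually delivers, namely $\phi$-flatness of $(\bigoplus M_\alpha)^{++}\cong(\prod M_\alpha^{+})^{+}$ --- but your proposed bridge does not work. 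Extracting ``$N^{+}$ is $\phi$-flat whenever $N$ is nonnil-FP-injective'' is precisely statement $(2)$ of the theorem, and the only proof of $(2)$ available at this point runs through nonnil-coherence (one needs $I$, hence $R/I$, finitely \emph{presented} to make the natural map $\psi_I$ an isomorphism in the Hom--Tor interchange). So the route you flag as ``the technical heart'' is circular, and the gap is genuine.

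The fix is purely formal and needs no new homological input. Since $\bigoplus_\alpha M_\alpha^{+}$ is a \emph{pure} submodule of $\prod_\alpha M_\alpha^{+}$ (\cite[Lemma 1(1)]{CS81}), applying $\Hom_R(-,E_2)$ with $E_2$ injective turns this pure monomorphism into a \emph{split} epimorphism
$$\Hom_R\Bigl(\prod_\alpha \Hom_R(M_\alpha,E_1),E_2\Bigr)\twoheadrightarrow \Hom_R\Bigl(\bigoplus_\alpha \Hom_R(M_\alpha,E_1),E_2\Bigr)$$
(\cite[Lemma 2.19]{gt}). The source is $\Hom_R(\Hom_R(\bigoplus_\alpha M_\alpha,E_1),E_2)$, which is $\phi$-flat by $(5)$ applied to the $\phi$-flat module $\bigoplus_\alpha M_\alpha$; hence its direct summand $\Hom_R(\bigoplus_\alpha\Hom_R(M_\alpha,E_1),E_2)\cong\prod_\alpha M_\alpha^{++}$ is $\phi$-flat, and the pure embedding $\prod_\alpha M_\alpha\hookrightarrow\prod_\alpha M_\alpha^{++}$ finishes the argument via Theorem~\ref{non-coh-flat}. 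In short: you do not need a Lambek-type dual of Proposition~\ref{flat-FP-injective}; you need the observation that dualizing a pure embedding produces a split surjection, so the ``wrong'' double dual that $(5)$ hands you already contains the ``right'' one as a direct summand.
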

\begin{proof}
$(2)\Rightarrow (3)$ and $(4)\Rightarrow (5)$: Trivial.

$(3)\Leftrightarrow (4)$: Follows from Proposition \ref{flat-FP-injective}.

 $(1)\Rightarrow (2)$: Let $I$ be a finitely generated nonnil ideal of $R$.  Consider the following commutative diagram with exact rows($(-,-)$ is instead of $\Hom_R(-,-)$):
$$\xymatrix@R=20pt@C=15pt{
0\ar[r]^{}&\Tor_1^R((M,E),R/I) \ar[r]^{}\ar[d]^{\psi^1_{R/I}} &(M,E)\otimes_R I \ar[d]_{\psi_{I}}\ar[r]^{} & (M,E)\otimes_R R \ar[d]_{\psi_{R}}^{\cong}\ar[r]^{} &(M,E)\otimes_R R/I \ar[d]_{\psi_{R/I}}^{}\ar[r]^{} &0\\
0\ar[r]&(\Ext_R^1(R/I,M),E) \ar[r]^{} &((I,M),E)\ar[r]^{} & ((R,M),E) \ar[r]^{} &((R/I,M),E)\ar[r]^{} &0.\\ }$$
Since $I$ and $R$ are finitely presented, then $\psi_{I}$ and $\psi_{R}$ are isomorphisms by  \cite[Proposition 8.14(1)]{hh} and \cite[Theorem 2]{ELMUT1969}. Thus $\psi^1_{R/I}$ is an isomorphism by the Five Lemma.
Since $M$ is nonnil-FP-injective, $\Ext_R^1(R/I,M)=0$. Then $\Tor_1^R(\Hom_R(M,E),R/I)=0$, and thus $\Hom_R(M,E)$ is $\phi$-flat.

$(5)\Rightarrow (1)$: Let  $\{M_i\}_{i\in\Gamma}$ be a family of $\phi$-flat modules and $E_1$ and $E_2$ be injective cogenerators. Then $\bigoplus\limits_{i\in\Gamma}M_i$ is $\phi$-flat. Then
 $$\Hom_R(\Hom_R(\bigoplus\limits_{i\in\Gamma}M_i,E_1),E_2)\cong \Hom_R(\prod\limits_{i\in\Gamma}\Hom_R(M_i,E_1),E_2)$$ is $\phi$-flat by (5).
Note that $\bigoplus\limits_{i\in\Gamma}\Hom_R(M_i,E_1)$  is the pure submodule of $\prod\limits_{i\in\Gamma}\Hom_R(M_i,E_1)$ by \cite[Lemma 1(1)]{CS81}.
Thus the natural epimorphism $$\Hom_R(\prod\limits_{i\in\Gamma}\Hom_R(M_i,E_1),E_2)\twoheadrightarrow \Hom_R(\bigoplus\limits_{i\in\Gamma}\Hom_R(M_i,E_1),E_2)$$ splits by \cite[Lemma 2.19]{gt}.
It follows that $$ \prod\limits_{i\in\Gamma}\Hom_R(\Hom_R(M_i,E_1),E_2) \cong \Hom_R(\bigoplus\limits_{i\in\Gamma}\Hom_R(M_i,E_1),E_2)$$ is $\phi$-flat. By \cite[Corollary 2.21]{gt}, $\prod\limits_{i\in\Gamma} M_i$ is the pure submodule of $\prod\limits_{i\in\Gamma}\Hom_R(\Hom_R(M_i,E_1),E_2)$. Thus $\prod\limits_{i\in\Gamma} M_i$ is $\phi$-flat. By Theorem \ref{non-coh-flat}, $R$ is nonnil-coherent.
\end{proof}

\section{on $\phi$-IF rings}

Recall from \cite{J73} that a ring $R$ is called an \emph{IF ring} if any injective $R$-module is flat. We generalize the concept of IF rings to that of rings  in $\mathcal{H}$ using nonnil-injective modules and $\phi$-flat modules.

\begin{definition}\label{def-phi-IF}
Let $R$ be a $\phi$-ring. Then $R$ is said to be a \emph{$\phi$-IF ring} provided that any nonnil-injective $R$-module is $\phi$-flat.
\end{definition}

\begin{lemma}\label{phi-IF-coh}
Let $R$ be a $\phi$-IF ring, then $R$ is  nonnil-coherent.
\end{lemma}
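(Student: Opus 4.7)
The plan is to deduce nonnil-coherence from the $\phi$-IF hypothesis by verifying condition (3) of Theorem \ref{phi-coh-fp}, namely that $\Hom_R(M,E)$ is $\phi$-flat whenever $M$ is nonnil-injective and $E$ is injective. Once this single property is established, Theorem \ref{phi-coh-fp} immediately delivers nonnil-coherence, so the entire argument reduces to chaining two applications of the $\phi$-IF hypothesis through Proposition \ref{flat-FP-injective}.

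First I would fix an arbitrary nonnil-injective module $M$ and an arbitrary injective module $E$. Because $R$ is $\phi$-IF, the definition gives immediately that $M$ is $\phi$-flat. Then I would invoke the implication (1)$\Rightarrow$(2) of Proposition \ref{flat-FP-injective}, applied to this $\phi$-flat module $M$ and the injective module $E$, to conclude that $\Hom_R(M,E)$ is nonnil-injective.

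The second step is to apply the $\phi$-IF hypothesis again, this time to the nonnil-injective module $\Hom_R(M,E)$ just produced. This yields that $\Hom_R(M,E)$ is $\phi$-flat. Since $M$ and $E$ were arbitrary (subject to the stated hypotheses), condition (3) of Theorem \ref{phi-coh-fp} holds, and therefore $R$ is nonnil-coherent.

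There is essentially no hidden obstacle here: the lemma is really a tautological consequence of the $\phi$-IF definition once one notices that Proposition \ref{flat-FP-injective} provides the bridge that converts a $\phi$-flat module back into a nonnil-injective $\Hom$-object. The only minor subtlety is to resist trying a direct route through Theorem \ref{non-coh-flat} (showing $R^I$ is $\phi$-flat for every index set $I$), which is harder to obtain directly; routing through Theorem \ref{phi-coh-fp} instead makes the two-step application of $\phi$-IF completely transparent.
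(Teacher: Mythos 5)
Your proof is correct and follows essentially the same route as the paper: fix a nonnil-injective $M$ and injective $E$, use the $\phi$-IF hypothesis and Proposition \ref{flat-FP-injective} to see $\Hom_R(M,E)$ is nonnil-injective, hence $\phi$-flat, and conclude via Theorem \ref{phi-coh-fp}. In fact you make explicit the second application of the $\phi$-IF hypothesis that the paper's own proof leaves implicit.
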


\begin{proof}
Let $M$ be a nonnnil-injective  $R$-module and $E$ an injective $R$-module. Then $M$ is  $\phi$-flat as $R$ is a $\phi$-IF ring. Thus $\Hom_R(M,E)$ is nonnnil-injective by Proposition \ref{flat-FP-injective}. Consequently, $R$ is nonnil-coherent by Theorem \ref{phi-coh-fp}.
\end{proof}

\begin{proposition}\label{phi-iso}
Let $R$ be a nonnil-coherent ring, $M$ an $R$-module. Suppose  $T$ is a finitely presented $\phi$-torsion module and  $E$ is an injective  $R$-module. Then $$\Tor_1^R(T,\Hom_R(M,E))\cong \Hom_R(\Ext_R^1(T,M),E).$$
\end{proposition}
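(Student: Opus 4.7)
The plan is to produce a projective resolution
\[P_2 \to P_1 \to P_0 \to T \to 0\]
of $T$ whose first three terms are all finitely generated free $R$-modules, and then identify the two sides of the asserted isomorphism termwise through the natural ``double-dual'' transformation, using injectivity of $E$ to transport cohomology into homology.

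The core technical point is the following syzygy claim, which I would prove by induction on the number $n$ of generators of $T$: any finitely presented $\phi$-torsion $R$-module admits a three-term resolution by finitely generated free modules. For $n=1$ we have $T\cong R/I$ with $I=(0:_R t_1)$ a finitely generated nonnil ideal, and nonnil-coherence of $R$ makes $I$ finitely presented; splicing any finite free presentation $R^{m}\to R^{\ell}\to I\to 0$ with $0\to I\to R\to T\to 0$ yields the desired $R^{m}\to R^{\ell}\to R\to T\to 0$. For the inductive step, Lemma \ref{sub-fp} supplies a short exact sequence $0\to T_{n-1}\to T\to R/I\to 0$ whose outer terms are finitely presented $\phi$-torsion modules on strictly fewer generators; the Horseshoe Lemma then combines their three-term free resolutions (available by induction) into a three-term free resolution of $T$.

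Next, consider the natural transformation
\[\theta_X\colon X\otimes_R\Hom_R(M,E)\to \Hom_R(\Hom_R(X,M),E),\qquad x\otimes f\longmapsto\bigl(g\mapsto f(g(x))\bigr).\]
Since $\theta_R$ is tautologically an isomorphism, additivity of both sides makes $\theta_P$ an isomorphism for every finitely generated free $P$. Applying $\theta$ termwise to $P_2\to P_1\to P_0$ yields an isomorphism of three-term chain complexes
\[P_\bullet\otimes_R\Hom_R(M,E)\;\cong\;\Hom_R(\Hom_R(P_\bullet,M),E),\]
which is enough to compute first homology on both sides.

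Finally, $H_1$ of the left complex equals $\Tor_1^R(T,\Hom_R(M,E))$ by the very definition of $\Tor$. For the right complex, the injectivity of $E$ makes $\Hom_R(-,E)$ exact, so it carries the first cohomology $\Ext_R^1(T,M)$ of the cochain complex $\Hom_R(P_\bullet,M)$ into the first homology $\Hom_R(\Ext_R^1(T,M),E)$ of its $E$-dual chain complex. Combining the two identifications delivers the stated isomorphism. The main obstacle is the syzygy claim of the second paragraph: nonnil-coherence a priori controls only finitely generated \emph{nonnil} ideals, so unlike the classical coherent case it is not immediate that the first syzygy of a finitely presented $\phi$-torsion module is finitely presented; Lemma \ref{sub-fp} is exactly the tool that peels generators off one at a time and reduces the question to cyclic quotients $R/I$ where nonnil-coherence can be applied directly.
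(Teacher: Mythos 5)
Your proof is correct and follows essentially the same route as the paper's: the crux in both arguments is that a finitely presented $\phi$-torsion module admits a three-term finite free resolution (equivalently, its first syzygy is finitely presented), established by induction on the number of generators via Lemma \ref{sub-fp}. The only difference is cosmetic --- the paper applies the natural map $\Hom_R(M,E)\otimes_R X\to\Hom_R(\Hom_R(X,M),E)$ to the finitely presented modules $K$, $R^n$, $T$ and invokes the Five Lemma, whereas you apply it termwise to the free terms of the resolution and then take homology, using exactness of $\Hom_R(-,E)$; both reduce to the same standard isomorphism.
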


\begin{proof} Suppose $T$ is generated by $n$ elements, then there is a exact sequence $0\rightarrow K\rightarrow P\rightarrow T\rightarrow 0$ with $P=R^n$ and $K$ finitely generated. We will show $K$ is finitely presented by induction on $n$. If $n=1$, then $K$ is a finitely generated nonnil ideal of $R$. Thus $K$ is finitely presented as $R$ is nonnil-coherent. Suppose $n=k+1$, then there is commutative diagram:
$$\xymatrix@R=20pt@C=25pt{
0 \ar[r]^{} & K\cap R^{k} \ar@{^{(}->}[d]\ar[r]^{} &R^{k} \ar@{^{(}->}[d]\ar[r]^{} &R^{k}/K\cap R^{k}  \ar@{^{(}->}[d]\ar[r]^{} &  0\\
0 \ar[r]^{} & K\ar@{->>}[d]\ar[r]^{} & R^{k+1} \ar@{->>}[d]\ar[r]^{} &R^{k+1}/K\ar@{->>}[d]\ar[r]^{} &  0\\
0 \ar[r]^{} & I \ar[r]^{} & R\ar[r]^{} &R/I\ar[r]^{} &  0,\\}$$
where $I=K/K\cap R^{k}$ is an ideal of $R$. Since $T=R^{k+1}/K$ is finitely presented $\phi$-torsion, then  $R^{k}/K\cap R^{k}$ is finitely presented $\phi$-torsion by Lemma \ref{sub-fp}. Thus $R/I$ is finitely presented $\phi$-torsion by \cite[Theorem 2.1.2(2)]{g}. Since $R^{k}/K\cap R^{k}$ is generated by $k$ elements,  $K\cap R^{k}$ and $I$ are  finitely presented  by induction. Thus $K$ is  finitely presented by \cite[Theorem 2.1.2(1)]{g}.

Let $F$ be a  $\phi$-flat $R$-module and $E$ a injective $R$-module. Then there is a  commutative diagram with exact rows($(-,-)$ is instead of $\Hom_R(-,-)$):
 $$\xymatrix@R=20pt@C=15pt{
0\ar[r]^{}&\Tor_1^R(T,(F,E)) \ar[r]^{}\ar[d]^{\psi^1_{T}} &(F,E)\otimes_R K \ar[d]_{\psi_{K}}^{\cong}\ar[r]^{} & (F,E)\otimes_R P \ar[d]_{\psi_{P}}^{\cong}\ar[r]^{} &(F,E)\otimes_R T \ar[d]_{\psi_{T}}^{\cong}\ar[r]^{} &0\\
0\ar[r]&((\Ext_R^1(T,F),E) \ar[r]^{} &((K,F),E)\ar[r]^{} & ((P,M),E) \ar[r]^{} &((T,M),E)\ar[r]^{} &0.\\ }$$
Since $K, F$ and $T$ are finitely presented, $\psi_{K}, \psi_{F}$ and $\psi_{T}$ are isomorphisms by \cite[Theorem 2.6.13(2)]{fk16}. Thus $\psi^1_{T}$ is isomorphism by the Five Lemma.
\end{proof}

Let $M$ be an $R$-module and  $N$ a submodule of $M$. Then  $N\hookrightarrow M$ is said to be a $\phi$-embedding map provided that $M/N$ is a $\phi$-torsion module. Let $M$ be an $R$-module, then there is a nonnil-injective envelope, denoted by $E_{\phi}(M)$,  of $M$ (see  \cite[Theorem 2.7]{ZZ19}). Note that $M\hookrightarrow E_{\phi}(M)$ is a $\phi$-embedding map (see \cite[Theorem 2.14]{ZZ19}).

Recall from \cite{MD07} that an $R$-module $M$ is said to be \emph{copure flat} if $\Tor^R_1(E,M)=0$ for any injective module $E$. It was proved that a ring $R$ is an IF ring if and only if any $R$-module is copure flat (see \cite[Proposition 2.14]{MD07}). The following concepts give a ``strong'' version of copure flat modules.

\begin{definition} Let $R$ be an $\NP$-ring. An $R$-module $M$ is called  \emph{$\phi$-copure flat} provided that $\Tor^R_1(E,M)=0$ for any  nonnil-injective module $E$.
\end{definition}

\begin{theorem} \label{phi-IF-ext}
Let $R$ be a $\phi$-ring. The following statements are equivalent:
\begin{enumerate}
\item $R$ is a $\phi$-IF ring;
\item  $R$ is a nonnil-coherent ring and $R_{\p}$ is a $\phi$-IF ring for any $\p\in \Spec(R)$;
\item  $R$ is a nonnil-coherent ring and $R_{\m}$ is a $\phi$-IF ring for any $\m\in \Max(R)$;
%\item $R$ is a nonnil-coherent ring and $E_{\phi}(R/{\m})$ is a $\phi$-flat module for any maximal ideal $\m$ of $R$;
\item any $R$-module can be $\phi$-embedded into a  $\phi$-flat module.
     \item any nonnil-FP-injective module is $\phi$-flat;
    \item $R$ is nonnil-coherent and any $\phi$-flat module is nonnil-FP-injective;
    \item an $R$-module $M$ is $\phi$-flat if and only if $M$ is nonnil-FP-injective;
        \item any $\phi$-torsion $R$-module is $\phi$-copure flat;
         \item $R/\Nil(R)$ is a field.
\end{enumerate}
\end{theorem}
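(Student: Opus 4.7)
My plan pivots on the key observation that condition (9), $R/\Nil(R)$ a field, is equivalent to the triviality of the $\phi$-torsion theory: every $\phi$-torsion $R$-module is zero. One direction is immediate: if $\bar R = R/\Nil(R)$ is a field, then any non-nilpotent $a\in R$ satisfies $ab=1+n$ with $n\in\Nil(R)$, and since $1+n$ is a unit (inverse $\sum_{i\ge 0}(-n)^i$), $a$ is itself a unit; hence every nonnil ideal equals $R$ and every $\phi$-torsion module vanishes. The converse follows from the fact that a non-nilpotent non-unit modulo $\Nil(R)$ spawns the nonzero $\phi$-torsion module $R/((a)+\Nil(R))$. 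With this reduction, (9) $\Rightarrow$ (1)--(8) is essentially vacuous: all Tor/Ext groups involving $\phi$-torsion modules vanish automatically; $R$ is trivially nonnil-coherent because $R$ is its only nonnil ideal; and $\Spec(R)=\{\Nil(R)\}$ with $R_{\Nil(R)}\cong R$ takes care of (2), (3), while the identity embedding $N\hookrightarrow N$ yields (4).

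The central nontrivial implication is (1) $\Rightarrow$ (9). By Lemma \ref{phi-IF-coh}, $R$ is nonnil-coherent, so Proposition \ref{phi-iso} applies. Taking $M=R$ and any injective cogenerator $E$, for every finitely presented $\phi$-torsion $T$ one has
\[
\Tor_1^R(T,E)\cong \Hom_R(\Ext_R^1(T,R),E).
\]
Since $E$ is nonnil-injective, (1) makes $E$ $\phi$-flat and the left side vanishes; as $E$ is a cogenerator, $\Ext_R^1(T,R)=0$, i.e., $R$ is nonnil-FP-injective as an $R$-module. I then pass to the strongly $\phi$-ring $\phi(R)$, which shares the reduced quotient $\bar R\cong \phi(R)/\Nil(\phi(R))$ and in which every non-nilpotent element is a non-zero-divisor. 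In $\phi(R)$, for each non-nilpotent $a$ the short exact sequence $0\to (a)\to\phi(R)\to\phi(R)/(a)\to 0$ combined with $(0:_{\phi(R)}a)=0$ collapses $\Ext^1(\phi(R)/(a),\phi(R))$ to $\phi(R)/(a)$; transferring the nonnil-FP-injective property to $\phi(R)$ makes this quantity vanish, so $a$ is a unit. Hence every non-nilpotent element of $\phi(R)$ is a unit, and $\bar R$ is a field.

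The other converse implications funnel into this step. (5) $\Rightarrow$ (1) because nonnil-injective modules are nonnil-FP-injective. (7) $\Rightarrow$ (5) is one half of the biconditional in (7). (6) $\Rightarrow$ (9) directly because $R$ is $\phi$-flat, hence by (6) nonnil-FP-injective, and the argument above applies verbatim. (8) $\Rightarrow$ (1) because the symmetry $\Tor_1(A,B)\cong\Tor_1(B,A)$ turns ``every $\phi$-torsion module is $\phi$-copure flat'' into ``every nonnil-injective module is $\phi$-flat''. (4) $\Rightarrow$ (1) because a nonnil-injective $N$ $\phi$-embedded in a $\phi$-flat $F$ satisfies $\Ext^1(F/N,N)=0$ (as $F/N$ is $\phi$-torsion and $N$ is nonnil-injective), so the extension splits and $N$ is a direct summand of $F$, hence $\phi$-flat. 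Finally (1) $\Leftrightarrow$ (2) $\Leftrightarrow$ (3) use nonnil-coherence from Lemma \ref{phi-IF-coh} together with the fact that a coherent domain $\bar R$ is a field iff each $(\bar R)_{\bar\m}$ is a field at every maximal ideal $\bar\m$.

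The principal obstacle I foresee is the descent step in (1) $\Rightarrow$ (9): verifying that nonnil-FP-injectivity of $R$ over $R$ transfers to nonnil-FP-injectivity of $\phi(R)$ over $\phi(R)$ along the surjection $R\twoheadrightarrow\phi(R)$ whose kernel may fail to be finitely generated. If this descent turns out to be delicate, the alternative is a direct analysis of $\Ext_R^1(R/(a),R) = (0:_R(0:_R a))/(a)$ inside $R$, exploiting that $(0:_R a)\subseteq\Nil(R)$ is finitely generated by nonnil-coherence (hence a nilpotent ideal of $R$) and combining this with the divided-prime inclusion $\Nil(R)\subsetneq (a)$ to iteratively bootstrap $(a)$ to all of $R$.
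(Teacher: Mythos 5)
Your reduction of (9) to the statement that every $\phi$-torsion module vanishes is correct and disposes of $(9)\Rightarrow(1)$--$(8)$ cleanly (more self-containedly than the paper, which cites an external theorem here), and your arguments for $(4)\Rightarrow(1)$, $(5)\Rightarrow(1)$, $(7)\Rightarrow(5)$ and $(1)\Leftrightarrow(8)$ are fine. The genuine gap is the crux you yourself flag: $(1)\Rightarrow(9)$. Via Proposition \ref{phi-iso} with $M=R$ you correctly obtain $\Ext_R^1(T,R)=0$ for every finitely presented $\phi$-torsion $T$, but neither of your two routes from there to ``$R/\Nil(R)$ is a field'' goes through as written. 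The descent to $\phi(R)$ is not a formal consequence of anything you have established: $\phi(R)=R/Z$ with $Z=\ker(R\to R_{\Nil(R)})$ need not be finitely generated, $\phi(R)/(\bar a)$ need not be finitely presented over $R$, and even when it is, vanishing of $\Ext_R^1(-,R)$ does not transfer to vanishing of $\Ext_{\phi(R)}^1(-,\phi(R))$ without a change-of-rings argument you do not supply. The fallback is also insufficient in the form stated: the identity $(0:_R(0:_Ra))=(a)$ for all non-nilpotent $a$, combined with the divided-prime inclusion $\Nil(R)\subseteq(a)$, does \emph{not} force $R/\Nil(R)$ to be a field. Indeed, in $R=D(+)E$ with $D$ any domain with quotient field $Q$ and $E=\bigoplus_{i=1}^{\infty}Q/D$, one checks that for every non-nilpotent $(d,e)$ one has $(0:_R(d,e))=0(+)\{m\in E: dm=0\}$ and $(0:_R(0:_R(d,e)))=(d)(+)E=((d,e))$, yet $R/\Nil(R)\cong D$ is arbitrary. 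That ring is not nonnil-coherent, so the missing leverage would have to come from the finite generation of $(0:_Ra)$, but you give no argument for the proposed ``iterative bootstrap,'' and it is not clear one exists along these lines. Since $(6)\Rightarrow(9)$ and your treatment of $(2)$ and $(3)$ are routed through the same equivalence $(1)\Leftrightarrow(9)$, the gap propagates to those items as well.

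For comparison, the paper closes $(1)\Rightarrow(9)$ by an entirely different and short argument: any injective $R/\Nil(R)$-module is nonnil-injective over $R$ (\cite[Proposition 1.4]{ZxlQ21}), hence $\phi$-flat over $R$ by (1), hence flat over $R/\Nil(R)$ (\cite[Proposition 1.7]{ZxlZ20}); therefore $R/\Nil(R)$ is an IF domain and so a field by \cite[Proposition 3.1]{J73}. (The paper also proves $(2)$, $(3)$ and $(6)$ by direct localization arguments rather than through (9).) If you wish to avoid those external results, you would still need to prove some version of the transfer of flatness/injectivity across $R\to R/\Nil(R)$, which is precisely what your self-FP-injectivity computation does not reach.
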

\begin{proof}

$(1)\Rightarrow (4)$: Let $M$ be an $R$-module, $M\hookrightarrow E_{\phi}(M)$ the  $\phi$-embedding of $M$ into  $E_{\phi}(M)$.  Since $E_{\phi}(M)$ is a $\phi$-flat module by $(1)$ , then $(4)$ holds naturally.

$(4)\Rightarrow (1)$: Let $M$ be a nonnnil-injective and $M\subseteq F$ be the $\phi$-embedding of $M$ into a  $\phi$-flat module $F$. Since $F/M$ is $\phi$-torsion, $\Ext_R^1(F/M, M)=0$. Then $M$ is a direct summand of the $\phi$-flat module $F$, and thus $M$ is $\phi$-flat.

$(1)+(4)\Rightarrow (2)$:  By Lemma \ref{phi-IF-coh}, $R$ is a nonnil-coherent ring. Let $\p$ be a prime ideal of $R$, $A$ an $R_\p$-module and $F$ a $\phi$-flat $R$-module containing $A$ such that $F/A$ is $\phi$-torsion. Then $A\cong A_\p\subseteq F_\p$. Note that $F_\p/A_\p$ is a $\phi$-torsion $R_\p$-module (see  \cite[Proposition 2.12]{ZxlQ21}) and $F_\p$ is a $\phi$-flat $R_\p$-module (see \cite[Theorem 3.5]{ZWT13}). Thus  $R_{\p}$ is a $\phi$-IF ring by (4).

$(2)\Rightarrow (3)$:  Trivial.

$(1)\Rightarrow (6)$: Let $R$ be a  $\phi$-IF ring, then $R$ is nonnil-coherent  by Lemma \ref{phi-IF-coh}. Suppose  $T$ is a finitely presented $\phi$-torsion module. Let $F$ be a $\phi$-flat $R$-module and $E$ an injective $R$-module. By  Proposition \ref{phi-iso},
$$ \Tor_1^R(T,\Hom_R(F,E))\cong \Hom_R(\Ext_R^1(T,F),E).$$
By Proposition \ref{flat-FP-injective}, $\Hom_R(F,E)$ is  nonnil-injective. Since  $R$ is a  $\phi$-IF ring, $\Hom_R(F,E)$ is $\phi$-flat. Since  $T$ is finitely presented $\phi$-torsion, then   $\Tor_1^R(T,\Hom_R(F,E))=0$, and thus $\Hom_R(\Ext_R^1(T,F),E)=0$. It follows that $\Ext_R^1(T,F)=0$. Consequently, $F$ is nonnil-FP-injective.

$(6)\Rightarrow (7)$: Let $M$ be a nonnil-FP-injective $R$-module and $E$ an injective cogenerator over $R$. Let $T$ be a finitely presented $\phi$-torsion module, then $$\Hom(\Tor_1^R(T,M),E)\cong \Ext_R^1(T,\Hom_R(M,E)).$$
 Since $R$ is nonnil-coherent, $\Hom_R(M,E)$ is $\phi$-flat by Theorem \ref{phi-coh-fp}. Then $\Hom_R(M,E)$ is  nonnil-FP-injective by $(6)$. It follows that $\Ext_R^1(T,\Hom_R(M,E))=0$, and thus $\Hom(\Tor_1^R(T,M),E)$. Consequently,  $\Tor_1^R(T,M)=0.$ So $M$ is $\phi$-flat.

$(7)\Rightarrow (1)$: Note that nonnil-injective modules are nonnil-FP-injective. Thus any nonnil-injective module is $\phi$-flat by $(7)$.

 $(1)\Rightarrow (8)$: Let $M$ be a nonnil-injective $R$-module and $T$ a $\phi$-torsion module. Then $M$ is $\phi$-flat, and thus $\Tor_1^R(T,M)=0$. Consequently, $T$ is $\phi$-copure flat.

 $(8)\Rightarrow (1)$: Let  $M$ be a nonnil-injective $R$-module and $T$ a $\phi$-torsion module. Since $T$ is $\phi$-copure flat, then $\Tor_1^R(T,M)=0$. Thus $M$ is $\phi$-flat.

$(3)+(1)\Rightarrow (6)$: Let $M$ be a $\phi$-flat module, then $M_{\m}$ is a $\phi$-flat $R_{\m}$-module for any maximal ideal $\m$ of $R$. Thus $M_{\m}$ is a nonnil-FP-injective $R_{\m}$-module by  $(1)\Rightarrow (6)$ for the $\phi$-IF ring $R_{\m}$. Let $T$  be a finitely presented $\phi$-torsion module. Then, by \cite[Theorem 2.6.16]{fk16}, $\Ext_R^1(T,M)_\m\cong \Ext_{R_\m}^1(T_\m,M_\m)=0$ as $T_\m$ is a finitely presented $\phi$-torsion $R_{\m}$-module by \cite[Proposition 2.12]{ZxlQ21}. Thus $\Ext_R^1(T,M)=0$. So $M$ is nonnil-FP-injective.

$(7)\Rightarrow (5)\Rightarrow (1)$: Trivial.

$(9)\Rightarrow (1)$: If  $R/\Nil(R)$ is a field, then any $R$-module is $\phi$-flat and nonnil-injective by \cite[Theorem 1.7]{ZxlQ21}.   Thus $R$ is a $\phi$-IF ring.

$(1)\Rightarrow (9)$: Suppose $R$ is a $\phi$-IF ring. Let $E$ be an injective $R/\Nil(R)$-module, then  $E$ is a nonnil-injective $R$-module by \cite[Proposition 1.4]{ZxlQ21}. Thus $E$ is $\phi$-flat over $R$. It follows from \cite[Proposition 1.7]{ZxlZ20} that $E$ is flat over $R/\Nil(R)$. Consequently, $R/\Nil(R)$ is an IF domain, and thus is a field by \cite[Proposition 3.1]{J73}.
\end{proof}

\begin{proposition}\label{IF-phi-if}
Let $R$ be a strongly $\phi$-ring. If $R$ is an IF ring, then $R$ is a $\phi$-IF ring.
\end{proposition}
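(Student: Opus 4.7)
The plan is to reduce the claim to Theorem \ref{phi-IF-ext}, specifically to the equivalence $(1) \Leftrightarrow (9)$ there: a $\phi$-ring $R$ is $\phi$-IF if and only if $R/\Nil(R)$ is a field. Under the hypotheses, it therefore suffices to prove that $R/\Nil(R)$ is a field, and for this I will show that every non-nilpotent element of $R$ is a unit.

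The key input will be Colby's classical characterization of commutative IF rings (as contained in the references \cite{M85, FD10} cited in the introduction): a ring $R$ is IF if and only if $R$ is coherent and $R$ is FP-injective as an $R$-module. In particular, under the hypothesis that $R$ is IF, we have $\Ext^1_R(N,R)=0$ for every finitely presented $R$-module $N$.

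Now fix any non-nilpotent element $r\in R$. Because $R$ is a strongly $\phi$-ring, $Z(R)=\Nil(R)$, so $r$ is a regular element of $R$. This yields a short exact sequence
$$0 \longrightarrow R \xrightarrow{\,r\,} R \longrightarrow R/rR \longrightarrow 0$$
in which $R/rR$ is finitely presented. Applying $\Hom_R(-,R)$ and using the vanishing $\Ext^1_R(R/rR,R)=0$, the induced map $R \xrightarrow{\,r\,} R$ is surjective, so $r$ is a unit. Consequently every non-zero element of $R/\Nil(R)$ is a unit, so $R/\Nil(R)$ is a field, and Theorem \ref{phi-IF-ext} then gives that $R$ is $\phi$-IF.

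The only real obstacle in this plan is the appeal to the coherent + FP-injective characterization of IF rings; once that classical fact is in hand, the remainder of the argument consists of a single short exact sequence together with the observation that non-nilpotent elements in a strongly $\phi$-ring are regular. If one preferred to avoid citing Colby, one could instead work with the injective hull $E(R)$ (which is flat by the IF hypothesis), verify using regularity of non-nilpotent elements that $R$ is pure in $E(R)$, and thereby recover FP-injectivity of $R$ by hand; the remaining argument is unchanged.
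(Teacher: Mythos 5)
Your proof is correct and follows essentially the same route as the paper: both arguments use that in a strongly $\phi$-ring non-nilpotent elements are regular, that in an IF ring regular elements are units, and then conclude that $R/\Nil(R)$ is a field so that Theorem \ref{phi-IF-ext} applies. The only difference is that the paper cites Matlis \cite[Proposition 2.1(1)]{M85} for the invertibility of regular elements, whereas you derive it from the self-FP-injectivity of IF rings via the sequence $0\to R\xrightarrow{r} R\to R/rR\to 0$; this is a valid substitute for the citation.
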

\begin{proof} Since $R$ is a strongly $\phi$-ring, any non-nilpotent element is regular. Because $R$ is an IF ring , any regular element is invertible by \cite[Proposition 2.1(1)]{M85}. Then the Krull dimension of $R$ is $0$, and thus $R/\Nil(R)$ is a field. Consequently,  $R$ is a $\phi$-IF ring by Theorem \ref{phi-IF-ext}.
\end{proof}

Note that  every IF ring is not $\phi$-IF. For example, let $R$ be a von Neumann regular ring not a field. Then $R$ is an IF ring. Since $\Nil(R)=0$ is not a prime ideal, then $R$ is not a $\phi$-ring, and thus not a $\phi$-IF ring. The following example shows that  IF rings are also not necessary $\phi$-IF rings for $\phi$-rings.

\begin{example}\label{not phi-IF}
Let $D$ be a Pr\"{u}fer domain  not a field, and $Q$ its quotient field. Let $R=D(+)Q/D$ be the idealization construction.  Then $\Nil(R)=0(+)Q/D$ is a prime ideal of $R$. Thus  $R$ is a $\phi$-ring by \cite[Corollary 3.4]{DW09}. By \cite[Example 2.12(1)]{AK17} $R$ is an IF ring. However $R/\Nil(R)\cong D$ is not a field. Thus $R$ is not a $\phi$-IF ring by Theorem \ref{phi-IF-ext}.
\end{example}

It is also showed that $\phi$-IF rings are not necessary  IF rings.

\begin{example}\label{not IF}
Let $K$ be a field  and $V=\prod\limits_{i=1}^{\infty}K$ an infinite dimensional vector space over $K$. Let $R=K(+)V$ be the idealization construction. Obviously, $R$ is a $\phi$-ring. Note that $\Nil(R)=0(+)V$. Since $K\cong R/\Nil(R)$ is a field, $R$ is a  $\phi$-IF ring by Theorem \ref{phi-IF-ext}.  Let $v$ be a vector in $V$ with each component equal to $1$. Then $(0:_R(0,v))=0(+)V$. Since $V$ is an infinite dimensional $K$-vector space, $0(+)V$ is not a finitely generated $R$-ideal by Lemma \ref{ideal-con-finite}. Then $R$ is not coherent by \cite[Theorem 2.3.2]{g}. Thus $R$ is not an IF ring by \cite[Proposition 3.3]{M85}.
\end{example}

Recall from \cite{FA04} that a $\phi$-ring $R$ is said to be a $\phi$-Pr\"{u}fer ring if  any finitely generated nonnil ideal of $R$ is $\phi$-invertible. They showed that a  $\phi$-ring $R$  is a $\phi$-Pr\"{u}fer ring if and only if $\phi(R)$ is a Pr\"{u}fer ring, if and only if $R/\Nil(R)$ is a Pr\"{u}fer domain(see \cite[Theorem 2.2, Theorem 2.6]{FA04}). Matlis \cite{M85} obtained that an integral domain $R$ is a  Pr\"{u}fer domain if and only if $R/I$ is an IF ring for any non-zero finitely generated $I$ of $R$. Now we give an analogue of  Matlis' result for $\phi$-rings.

\begin{corollary}\label{phi-pru-if}
Let $R$ be a $\phi$-ring. Then  $R$ is a  \emph{$\phi$-Pr\"{u}fer ring} if and only if $R/I$ is an IF ring for any finitely generated nonnil ideal $I$ of $R$.
\end{corollary}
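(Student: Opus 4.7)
The plan is to reduce the statement to Matlis' theorem for domains via the quotient $R/\Nil(R)$, exploiting the fact that in a $\phi$-ring the nilradical is a divided prime.

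First I would record the key observation that, since $\Nil(R)$ is a divided prime ideal, $\Nil(R) \subseteq (x)$ for every non-nilpotent $x \in R$; hence every nonnil ideal $I$ of $R$ contains $\Nil(R)$. Combined with the correspondence of \cite[Lemma 2.4]{FA04}, this gives a bijection between finitely generated nonnil ideals $I$ of $R$ and finitely generated non-zero ideals $J = I/\Nil(R)$ of the domain $D := R/\Nil(R)$, together with the canonical ring isomorphism
\[
R/I \;\cong\; D\big/(I/\Nil(R)).
\]
Thus ``$R/I$ is IF'' for a finitely generated nonnil ideal $I \subset R$ is \emph{literally the same statement} as ``$D/J$ is IF'' for the corresponding finitely generated non-zero ideal $J \subset D$.

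Next I would invoke the characterization of $\phi$-Pr\"ufer rings from \cite[Theorem 2.6]{FA04}: $R$ is $\phi$-Pr\"ufer if and only if $D = R/\Nil(R)$ is a Pr\"ufer domain. Combined with Matlis' classical theorem (\cite{M85}) that an integral domain $D$ is Pr\"ufer if and only if $D/J$ is an IF ring for every non-zero finitely generated ideal $J$ of $D$, the equivalence we wish to prove follows by direct transport along the bijection above: the forward direction uses that every finitely generated nonnil ideal $I$ of $R$ produces a non-zero finitely generated ideal of the Pr\"ufer domain $D$, and the backward direction uses that every non-zero finitely generated ideal of $D$ arises in this form.

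The only real point to verify, and therefore the main (though very minor) obstacle, is making sure the bijection between nonnil ideals of $R$ and non-zero ideals of $D$ preserves ``finitely generated'', which is exactly what \cite[Lemma 2.4]{FA04} records, and that the ring isomorphism $R/I \cong D/(I/\Nil(R))$ really does transport the property ``IF ring'' (this is automatic since IF is a Morita/ring-theoretic property invariant under ring isomorphism). With these in hand, the corollary is a one-line consequence of Matlis' theorem applied to the domain $R/\Nil(R)$.
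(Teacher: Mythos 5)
Your proposal is correct and follows essentially the same route as the paper: both reduce to the domain $R/\Nil(R)$ via the ideal correspondence of \cite[Lemma 2.4]{FA04}, the characterization that $R$ is $\phi$-Pr\"ufer iff $R/\Nil(R)$ is a Pr\"ufer domain, and Matlis' theorem applied through the isomorphism $R/I\cong \frac{R/\Nil(R)}{I/\Nil(R)}$. Your extra remark that every nonnil ideal contains $\Nil(R)$ (since the nilradical is divided prime) just makes explicit the fact underlying that correspondence, which the paper leaves implicit.
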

\begin{proof} Let  $R$ be a  $\phi$-Pr\"{u}fer ring and $I$ a finitely generated nonnil ideal of $R$. Then  $R/\Nil(R)$ is a Pr\"{u}fer domain. Since  $I/\Nil(R)$ is a finitely generated non-zero ideal of $R/\Nil(R)$ by \cite[Lemma 2.4]{FA04}, we have $R/I=\frac{R/\Nil(R)}{I/\Nil(R)}$ is an IF ring.

Let $I/\Nil(R)$ be a finitely generated nonzero ideal of $R/\Nil(R)$. Then $I$ is a finitely generated nonnil ideal of $R$  by \cite[Lemma 2.4]{FA04} again. Thus $\frac{R/\Nil(R)}{I/\Nil(R)}=R/I$ is an IF ring. Consequently, $R/\Nil(R)$ is a Pr\"{u}fer domain. It follows that $R$ is a  $\phi$-Pr\"{u}fer ring.
\end{proof}

\begin{acknowledgement}\quad\\
The second author was supported by the Natural Science Foundation of Chengdu Aeronautic Polytechnic (No. 062026) and the National Natural Science Foundation of China (No. 12061001).
\end{acknowledgement}

\bigskip

\end{document}